\author{Chuanan Wei\affiliationmark{1}\thanks{I am supported by the National Natural Science Foundations of China (Nos. 11661032, 11301120).}
  \and Xiaoxia Wang\affiliationmark{2}\thanks{And she is supported by the National Natural Science Foundations of China (Nos. 11661032, 11201291).}
  }
\title[Evaluations of series of the $q$-Watson, $q$-Dixon, and $q$-Whipple type]{Evaluations of series of the $q$-Watson, $q$-Dixon, and $q$-Whipple type}
\affiliation{
 Department of Medical Informatics, Hainan Medical University, China\\
  Department of Mathematics, Shanghai University, China}
\keywords{$q$-Watson formula, $q$-Dixon formula, $q$-Whipple
formula}
\begin{document}
\publicationdetails{19}{2017}{1}{19}{1434}
\maketitle
\begin{abstract}
 Using $q$-series identities and series rearrangement, we establish
several extensions of $q$-Watson formulas with two extra integer
parameters. Then they and Sears' transformation formula are utilized
to derive some generalizations of $q$-Dixon formulas and $q$-Whipple
formulas with two extra integer parameters. As special cases of
these results, many interesting evaluations of series of $q$-Watson,
$q$-Dixon, and $q$-Whipple type are displayed.
\end{abstract}

\section{Introduction}
 For two complex numbers $x$ and $q$, define the $q$-shifted
factorial by
\begin{eqnarray*}
 (x;q)_0=1 ~~~\text{and}~~~  (x;q)_n=\prod_{k=0}^{n-1}(1-xq^k)
 ~~~\text{when}~~~ n\in \mathbb{N}.
  \end{eqnarray*}
The fraction form of it reads as
\[\qqdn\qdn\ffnk{ccccc}{q}{a,&b,&\cdots,&c}{\alpha,&\beta,&\cdots,&\gamma}_n
=\frac{(a;q)_n(b;q)_n\cdots(c;q)_n}{(\alpha;q)_n(\beta;q)_n\cdots(\gamma;q)_n}.\]
 Following \cite{gasper}, define the basic hypergeometric series by
\[_{1+r}\phi_s\ffnk{cccc}{q;z}{a_0,a_1,\cdots,a_r}{b_1,b_2,\cdots,b_s}
 =\sum_{k=0}^\infty
\ffnk{ccccc}{q}{a_0,a_1,\cdots,a_r}{b_1,b_2,\cdots,b_s}_k\bigg\{(-1)^kq^{\binom{k}{2}}\bigg\}^{s-r}\qdn\frac{z^k}{(q;q)_k},\]
where $\{a_i\}_{i\geq0}$ and $\{b_j\}_{j\geq1}$ are complex
parameters such that no zero factors appear in the denominators of
the summand on the right hand side. Then Sears' transformation
formula (cf. Equation (2.10.4) of \cite{gasper}) can be expressed as
\begin{eqnarray}\qquad \label{sear}
{_4\phi_3}\ffnk{ccccc}{q;q}{q^{-n},a,b,c}{d,e,q^{1-n}abc/de}
=\ffnk{ccccc}{q}{d/a,de/bc}{d,de/abc}_n
{_4\phi_3}\ffnk{ccccc}{q;q}{q^{-n},a,e/b,e/c}{e,de/bc,q^{1-n}a/d}.
 \end{eqnarray}

There are many interesting formulas for basic hypergeometric series
in the literature. Here we consider a number of $_4\phi_3$
summations. First, the $q$-Watson formula due to \cite{andrews} and
the $q$-Watson formula that is Equation (3.17) of \cite{jain} read,
respectively, as
 \begin{eqnarray}\qquad \label{andrews-watson}
{_4\phi_3}\ffnk{ccccc}{q;q}{q^{-n},q^{1+n}a,\sqrt{c},-\sqrt{c}}
{q\sqrt{a},-q\sqrt{a},c}=c^{n/2}\ffnk{ccccc}{q^2}{q,q^2a/c}{q^{2}a,qc}_{\frac{n}{2}}\chi(n)
 \end{eqnarray}
where $\chi(n)=\{1+(-1)^n\}/2$,
  \begin{eqnarray} \label{jain-watson}
{_4\phi_3}\ffnk{ccccc}{q;q}{a,c,q^{-n},-q^{-n}}
{\sqrt{qac},-\sqrt{qac},q^{-2n}}=\ffnk{ccccc}{q^2}{qa,qc}{q,qac}_n.
\end{eqnarray}
Second, the $q$-Bailey-Dixon formula (cf. page 8 of \cite{gasper})
can be stated as
\begin{eqnarray} \label{Bailey-Dixon}
&&\xxqdn{_4\phi_3}\ffnk{ccccc}{q;q}{q^{-n},a,c,-q^{1-n}/ac}
{q^{1-n}/a,q^{1-n}/c,-ac} =\ffnk{ccccc}{q}{-a,c^2}{-ac,c}_{n}
\ffnk{ccccc}{q^2}{q,q^{n}a^2c^2}{qc^2,q^{n}a^2}_{\frac{n}{2}}\chi(n).
 \end{eqnarray}
By specifying the parameters in \eqref{sear}, we have the relation
 \begin{eqnarray*}
&&\xqdn\qqdn{_4\phi_3}\ffnk{ccccc}{q;q}{a,c,q^{-n},-q^{1+n}a/c}
{qa/c,q^{1+n}a,-q^{-n}c}
=\ffnk{ccccc}{q}{q^{1+n},-qa/c}{q^{1+n}a,-q/c}_{n}
{_4\phi_3}\ffnk{ccccc}{q;q}{a,qa/c^2,q^{-n},-q^{-n}}
{qa/c,-qa/c,q^{-2n}}.
\end{eqnarray*}
The combination of the last formula and \eqref{jain-watson} creates
another $q$-Dixon formula
  \begin{eqnarray} \label{another-Dixon}
{_4\phi_3}\ffnk{ccccc}{q;q}{a,c,q^{-n},-q^{1+n}a/c}
{qa/c,q^{1+n}a,-q^{-n}c}
=\ffnk{ccccc}{q}{q^{1+n},-qa/c}{q^{1+n}a,-q/c}_{n}
\ffnk{ccccc}{q^2}{qa,q^2a/c^2}{q,q^2a^2/c^2}_{n}.
\end{eqnarray}
Third, the $q$-Whipple formula due to \cite{andrews} and the
$q$-Whipple formula that is Equation (3.19) of \cite{jain} read,
respectively, as
 \begin{eqnarray}\qquad \label{andrews-whipple}
{_4\phi_3}\ffnk{ccccc}{q;q}{q^{-n},q^{1+n},\sqrt{qac},-\sqrt{qac}}
{-q,qa,qc}=\frac{(q^{1-n}a;q^2)_n(q^{1-n}c;q^2)_n}{(qa;q)_n(qc;q)_n}\,q^{\binom{n+1}{2}},
 \end{eqnarray}
  \begin{eqnarray} \label{jain-whipple}
{_4\phi_3}\ffnk{ccccc}{q;q}{a,q/a,q^{-n},-q^{-n}}
{c,q^{1-2n}/c,-q}=\frac{(ac;q^2)_n(qc/a;q^2)_n}{(c;q)_{2n}}.
 \end{eqnarray}

By means of contiguous relations for $_3F_2$-series,
\cite{lavoie-b,lavoie-d,lavoie-c} gave a lot of summation formulas
for Watson, Dixon, and Whipple type $_3F_2$-series. For some related
works, the reader may refer to \cite{lavoie-a} and \cite{rathie}. In
2011, \cite{chu-b} established the generalizations of Watson's
$_3F_2$-series identity with two extra integer parameters and
derived several summation formulas for Dixon and Whipple type
$_3F_2$-series according to hypergeometric series identities and
series rearrangement. Let $u$ and $v$ both be integers throughout
the paper. Inspired by Chu's method, we shall explore
 summation formulas for the following six series:
\begin{eqnarray*}
&&\xqdn{_4\phi_3}\ffnk{ccccc}{q;q}{q^{-n},q^{1+u+n}a,\sqrt{c},-q^{v}\!\sqrt{c}}
{q\sqrt{a},-q^{1+u}\!\sqrt{a},q^{v}c},\qquad\quad\:\:
{_4\phi_3}\ffnk{ccccc}{q;q}{q^{u}a,c,q^{-n},-q^{v-n}}
{\sqrt{qac},-q^{u}\!\sqrt{qac},q^{v-2n}},\\
&&\xqdn{_4\phi_3}\ffnk{ccccc}{q;q}{q^{-n},a,c,-q^{1+u+v-n}/ac}
{q^{1+u-n}/a,q^{1+v-n}/c,-ac},\qquad\quad\:\:
{_4\phi_3}\ffnk{ccccc}{q;q}{a,c,q^{-n},-q^{1+u+v+n}a/c}
{q^{1+u}a/c,q^{1+v+n}a,-q^{-n}c},\\
&&\xqdn{_4\phi_3}\ffnk{ccccc}{q;q}{q^{-n},q^{1+u+n},\sqrt{qac},-q^{v}\!\sqrt{qac}}
{-q,q^{1+u+v}a,qc},\qquad
{_4\phi_3}\ffnk{ccccc}{q;q}{a,q^{1+u}/a,q^{-n},-q^{v-n}}
{c,q^{1+u+v-2n}/c,-q},
 \end{eqnarray*}
which can be regarded as terminating $q$-analogues of the formulas
that appear in \cite{lavoie-a}, \cite{chu-b}, and
\cite{lavoie-b,lavoie-d,lavoie-c}. Note that when $u=v=0$ we recover
the series in equations \eqref{andrews-watson}-\eqref{jain-whipple}.

To give just one example of our results, we record our Theorem
\ref{thm-a} as follows:
 \begin{eqnarray*}
&&\xqdn{_4\phi_3}\ffnk{ccccc}{q;q}{q^{-n},q^{1+\ell+n}a,\sqrt{c},-q^{m}\!\sqrt{c}}
{q\sqrt{a},-q^{1+\ell}\!\sqrt{a},q^{m}c}=
 \ffnk{ccccc}{q}{q^{1+\ell+n}a,q^{1+n}\!\sqrt{a}}{q^{1+\ell+2n}a,q\sqrt{a}}_{\ell}
 \\&&\xqdn\:\:=\:
 \sum_{i=0}^{\ell}\sum_{j=0}^{m}q^{2i+(m+n-i)j-\binom{j}{2}}\frac{c^{(n-i)/2}}{a^{i/2}}
 \frac{1-q^{1+2\ell+2n-2i}a}{1-q^{1+2\ell+2n}a}
\ffnk{ccccc}{q}{q^{-\ell},q^{-n},q^{-2\ell-2n-1}/a}
{q,q^{-\ell-2n}/a,q^{-2\ell-n}/a}_i
 \\&&\xqdn\:\:\times\:\ffnk{ccccc}{q}{q^{-m},q^{i-n},q^{1+2\ell+n-i}a,\sqrt{c},-\sqrt{c}}
{q,q^{1+\ell}\!\sqrt{a},-q^{1+\ell}\!\sqrt{a},q^{m}c,q^{j-1}c}_j
\ffnk{ccccc}{q^2}{q,q^{2+2\ell}a/c}{q^{2+2\ell+2j}a,q^{1+2j}c}_{\frac{n-i-j}{2}}\chi(n-i-j),
 \end{eqnarray*}
where $\ell$ and $m$ are both nonnegative integers.

\section{Extensions of $q$-Watson formulas}
\subsection{Extensions of Andrews' $q$-Watson formula}

  In this subsection, we establish several two-parameter extensions of
equation \eqref{andrews-watson}. We begin with the following
one-parameter extension.

\begin{prop}\label{prop-a}
For two complex numbers $\{a,c\}$ and a nonnegative integer $m$,
there holds
 \begin{eqnarray*}
&&\xqdn{_4\phi_3}\ffnk{ccccc}{q;q}{q^{-n},q^{1+n}a,\sqrt{c},-q^{m}\!\sqrt{c}}
{q\sqrt{a},-q\sqrt{a},q^{m}c}\\&&\xqdn\:\:=\:\sum_{j=0}^{m}
q^{(m+n)j-\binom{j}{2}}c^{n/2}
\ffnk{ccccc}{q}{q^{-m},q^{-n},q^{1+n}a,\sqrt{c},-\sqrt{c}}
{q,q\sqrt{a},-q\sqrt{a},q^{m}c,q^{j-1}c}_j
 \\&&\xqdn\:\:\times\:
\ffnk{ccccc}{q^2}{q,q^2a/c}{q^{2+2j}a,q^{1+2j}c}_{\frac{n-j}{2}}\chi(n-j).
 \end{eqnarray*}
\end{prop}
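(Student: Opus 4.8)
The plan is to prove the identity by the \emph{method of series rearrangement}, reducing the left-hand $_4\phi_3$ to the classical $q$-Andrews--Watson formula \eqref{andrews-watson}. Write the left side as $\sum_{k=0}^{n}\Omega_k$, where
$\Omega_k=\ffnk{ccccc}{q}{q^{-n},q^{1+n}a,\sqrt{c},-q^{m}\sqrt{c}}{q,q\sqrt{a},-q\sqrt{a},q^{m}c}_k\,q^{k}$
is the summand of the proposition. The only discrepancy between $\Omega_k$ and the summand governed by \eqref{andrews-watson} lies in the parameter pair $-q^{m}\sqrt{c}$ (upper) and $q^{m}c$ (lower), which there read $-\sqrt{c}$ and $c$. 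The first step is to isolate this discrepancy using the elementary factorizations $(-q^{m}\sqrt{c};q)_k=(-\sqrt{c};q)_{m+k}/(-\sqrt{c};q)_m$ and $(q^{m}c;q)_k=(c;q)_{m+k}/(c;q)_m$, which yield
\[\frac{(-q^{m}\sqrt{c};q)_k}{(q^{m}c;q)_k}=\frac{(-\sqrt{c};q)_k}{(c;q)_k}\cdot\frac{(-q^{k}\sqrt{c};q)_m\,(c;q)_m}{(-\sqrt{c};q)_m\,(q^{k}c;q)_m}.\]
Thus $\Omega_k$ equals the bare $q$-Andrews--Watson summand times the correction factor $R_k:=\dfrac{(-q^{k}\sqrt{c};q)_m\,(c;q)_m}{(-\sqrt{c};q)_m\,(q^{k}c;q)_m}$.

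The second, and crucial, step is to expand the finite ratio $(-q^{k}\sqrt{c};q)_m/(q^{k}c;q)_m$ as a terminating sum over $j=0,\dots,m$ by means of a $q$-Chu--Vandermonde (equivalently $q$-Saalschütz) summation, the truncation at $m$ being produced by a factor $(q^{-m};q)_j$. The expansion should be arranged so that the $j$-th term, upon reindexing $k\mapsto k+j$, converts the base summand into the $q$-Andrews--Watson summand with the parameter substitution $(n,a,c)\mapsto(n-j,q^{2j}a,q^{2j}c)$; indeed one checks $(q^{-n};q)_{k+j}=(q^{-n};q)_j(q^{j-n};q)_k$, $(\sqrt{c};q)_{k+j}=(\sqrt{c};q)_j(q^{j}\sqrt{c};q)_k$ with $q^{j}\sqrt{c}=\sqrt{q^{2j}c}$, and similarly for the remaining four factors, so that the split is exactly of shifted Andrews--Watson type. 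The factors $(\sqrt{c};q)_j(-\sqrt{c};q)_j$ in the numerator and $(q^{m}c;q)_j(q^{j-1}c;q)_j$ in the denominator of the stated coefficient, together with the power $q^{(m+n)j-\binom{j}{2}}$, are precisely what this expansion and reindexing generate.

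The third step is routine once the expansion is in place: interchange the (finite) order of summation in the resulting double sum, so that for each fixed $j$ the inner sum over $k$ is the full series
${_4\phi_3}\ffnk{ccccc}{q;q}{q^{j-n},q^{1+n+j}a,q^{j}\sqrt{c},-q^{j}\sqrt{c}}{q^{1+j}\sqrt{a},-q^{1+j}\sqrt{a},q^{2j}c}$.
Applying \eqref{andrews-watson} with $(n,a,c)\mapsto(n-j,q^{2j}a,q^{2j}c)$ evaluates this inner sum as $(q^{2j}c)^{(n-j)/2}\ffnk{ccccc}{q^2}{q,q^2a/c}{q^{2+2j}a,q^{1+2j}c}_{\frac{n-j}{2}}\chi(n-j)$ (note $q^{2}(q^{2j}a)/(q^{2j}c)=q^{2}a/c$ is unchanged), which is exactly the $q^2$-factorial factor and the parity indicator appearing on the right-hand side. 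Collecting the $(q^{2j}c)^{(n-j)/2}$ with $c^{n/2}$ and the $q$-powers from the expansion then produces the single power $q^{(m+n)j-\binom{j}{2}}c^{n/2}$ displayed in the statement.

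The main obstacle is the second step: pinning down the exact terminating expansion of $(-q^{k}\sqrt{c};q)_m/(q^{k}c;q)_m$ and verifying that, after the shift $k\mapsto k+j$, every upper and lower parameter (and every power of $q$) recombines to give precisely the coefficient in the proposition. The bookkeeping of the $q$-shifted factorials under the index shift, and in particular the appearance of the nonstandard block $(q^{j-1}c;q)_j$, is the delicate part and must be tracked with care; everything else is the mechanical interchange of sums and a direct appeal to \eqref{andrews-watson}. As a consistency check, the case $m=0$ collapses the $j$-sum to its single term $j=0$ and reproduces \eqref{andrews-watson} verbatim. (An alternative route is induction on $m$ via a contiguous relation in the parameter pair $(-q^{m}\sqrt{c},q^{m}c)$, but the rearrangement argument is more transparent and is the method announced in the abstract.)
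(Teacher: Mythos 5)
Your overall strategy coincides with the paper's: peel off the correction factor $R_k$ separating the summand from the $q$-Andrews--Watson summand, expand it as a terminating $j$-sum with coefficients free of $k$, interchange the two finite sums, shift $k\to k+j$, and evaluate the inner series by \eqref{andrews-watson}. Your first and third steps match the paper's derivation of \eqref{equation-a} essentially verbatim, and your $m=0$ consistency check is correct.

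The gap is exactly where you flag it: the terminating expansion of $(-q^{k}\sqrt{c};q)_m/(q^{k}c;q)_m$ is never actually produced, and the tool you nominate for it is not the right one. The paper obtains the expansion not from $q$-Chu--Vandermonde or $q$-Saalsch\"utz but from the terminating very-well-poised $_6\phi_5$ summation \eqref{terminating-65}, specialized by $a\to c/q$, $b\to q^{-k}$, $c\to -\sqrt{c}$, which yields the partition of unity
\[
\sum_{j=0}^m q^{mj+\binm{j}{2}}c^{j/2}\,\frac{1-q^{2j-1}c}{1-q^{j+m-1}c}\,
\ffnk{ccccc}{q}{q^{-m}}{q}_{j}\ffnk{ccccc}{q}{-\sqrt{c}}{q^{j-1}c}_{m}\,
\frac{(q^{k-j+1};q)_j\,(q^{k+j}c;q)_{m-j}}{(-q^k\sqrt{c};q)_m}=1 ,
\]
i.e.\ precisely the resolution of $R_k$ into a $j$-sum whose $j$-th term carries the factor $(q^{k-j+1};q)_j$ that truncates the inner sum at $k\ge j$ and enables the shift $k\to i+j$. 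The very-well-poised factor $(1-q^{2j-1}c)/(1-q^{j+m-1}c)$ in these coefficients --- the ultimate source of the nonstandard block $(q^{j-1}c;q)_j$ you single out as the delicate point --- is the signature of a $_6\phi_5$-type evaluation and does not come out of a balanced $_2\phi_1$ or $_3\phi_2$ summation. Until this identity (or an equivalent expansion) is exhibited and the coefficients are checked to recombine into $q^{(m+n)j-\binm{j}{2}}c^{n/2}\ffnk{ccccc}{q}{q^{-m},q^{-n},q^{1+n}a,\sqrt{c},-\sqrt{c}}{q,q\sqrt{a},-q\sqrt{a},q^{m}c,q^{j-1}c}_j$, the argument is an outline rather than a proof; everything surrounding that missing lemma is sound.
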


\begin{proof}
 Letting $a\to c/q$, $b\to q^{-k}$, $c\to -\sqrt{c}$ in the
 $_6\phi_5$-series identity (cf. page 42 of \cite{gasper}):
 \begin{eqnarray}\label{terminating-65}
\qquad {_6\phi_5}\ffnk{cccccccc}{q;\frac{q^{1+m}a}{bc}}
{a,\:q\sqrt{a},\:-q\sqrt{a},\:b,\:c,\:q^{-m}}
 {\sqrt{a},\:-\sqrt{a},\:qa/b,\:qa/c,\:q^{1+m}a}
 =\ffnk{ccccc}{q}{qa,qa/bc}{qa/b,qa/c}_m,
 \end{eqnarray}
we get the equation
\begin{eqnarray*}
&&\sum_{j=0}^mq^{mj+\binom{j}{2}}c^{j/2}\frac{1-q^{2j-1}c}{1-q^{j+m-1}c}
\ffnk{ccccc}{q}{q^{-m}}{q}_{j}\ffnk{ccccc}{q}{-\sqrt{c}}{q^{j-1}c}_{m}
\\&&\:\,\times\:
 \frac{(q^{k-j+1};q)_j(q^{k+j}c;q)_{m-j}}{(-q^k\sqrt{c};q)_m}=1.
\end{eqnarray*}
 Then there is the following relation
  \begin{eqnarray*}
&&\xqdn{_4\phi_3}\ffnk{ccccc}{q;q}{q^{-n},q^{1+n}a,\sqrt{c},-q^{m}\!\sqrt{c}}
{q\sqrt{a},-q\sqrt{a},q^{m}c}=\sum_{k=0}^n\ffnk{ccccc}{q}{q^{-n},q^{1+n}a,\sqrt{c},-q^{m}\!\sqrt{c}}
{q,q\sqrt{a},-q\sqrt{a},q^{m}c}_kq^k\\
&&\xqdn\:\:=\:\sum_{k=0}^n\ffnk{ccccc}{q}{q^{-n},q^{1+n}a,\sqrt{c},-q^{m}\!\sqrt{c}}
{q,q\sqrt{a},-q\sqrt{a},q^{m}c}_kq^k
\sum_{j=0}^mq^{mj+\binom{j}{2}}c^{j/2}\frac{1-q^{2j-1}c}{1-q^{j+m-1}c}
\\&&\xqdn\:\:\times\:\ffnk{ccccc}{q}{q^{-m}}{q}_{j}\ffnk{ccccc}{q}{-\sqrt{c}}{q^{j-1}c}_{m}
 \frac{(q^{k-j+1};q)_j(q^{k+j}c;q)_{m-j}}{(-q^k\sqrt{c};q)_m}.
 \end{eqnarray*}
Interchange the summation order for the last double sum to obtain
  \begin{eqnarray*}
&&\xqdn{_4\phi_3}\ffnk{ccccc}{q;q}{q^{-n},q^{1+n}a,\sqrt{c},-q^{m}\!\sqrt{c}}
{q\sqrt{a},-q\sqrt{a},q^{m}c}=
\sum_{j=0}^mq^{mj+\binom{j}{2}}c^{j/2}\frac{1-q^{2j-1}c}{1-q^{j+m-1}c}
\\&&\xqdn\:\:\times\:\ffnk{ccccc}{q}{q^{-m}}{q}_{j}\ffnk{ccccc}{q}{-\sqrt{c}}{q^{j-1}c}_{m}
\\&&\xqdn\:\:\times\: \sum_{k=j}^n\ffnk{ccccc}{q}{q^{-n},q^{1+n}a,\sqrt{c},-q^{m}\!\sqrt{c}}
{q,q\sqrt{a},-q\sqrt{a},q^{m}c}_kq^k
 \frac{(q^{k-j+1};q)_j(q^{k+j}c;q)_{m-j}}{(-q^k\!\sqrt{c};q)_m}.
 \end{eqnarray*}
Shifting the index $k\to i+j$ for the sum on the last line, the
result reads as
 \begin{eqnarray}
&&{_4\phi_3}\ffnk{ccccc}{q;q}{q^{-n},q^{1+n}a,\sqrt{c},-q^{m}\!\sqrt{c}}
{q\sqrt{a},-q\sqrt{a},q^{m}c}\nonumber\\&&\:\:=\!\: \sum_{j=0}^m
 q^{mj+\binom{j+1}{2}}c^{j/2}\ffnk{ccccc}{q}{q^{-m},q^{-n},q^{1+n}a,\sqrt{c},-\sqrt{c}}
{q,q\sqrt{a},-q\sqrt{a},q^{m}c,q^{j-1}c}_j
\nonumber\\&&\:\:\times\,\:{_4\phi_3}\ffnk{ccccc}{q;q}{q^{j-n},q^{1+n+j}a,q^j\!\sqrt{c},-q^j\!\sqrt{c}}
{q^{1+j}\!\sqrt{a},-q^{1+j}\!\sqrt{a},q^{2j}c}. \label{equation-a}
 \end{eqnarray}
 Calculating the $_4\phi_3$-series on the last line by
\eqref{andrews-watson}, we complete the proof Proposition
\ref{prop-a}.
\end{proof}

\begin{exam}[$m=1$ in Proposition \ref{prop-a}]
 \begin{eqnarray*}
 &&{_4\phi_3}\ffnk{ccccc}{q;q}{q^{-n},q^{1+n}a,\sqrt{c},-q\sqrt{c}}
{q\sqrt{a},-q\sqrt{a},qc}\\
&&\:=\:
\begin{cases}
c^s\ffnk{ccccc}{q^2}{q,q^2a/c}{q^{2}a,qc}_s,&\qqdn n=2s;
\\[4mm]
c^{\frac{1}{2}+s}\!\ffnk{ccccc}{q^2}{q}{qc}_{1+s}\!\!
 \ffnk{ccccc}{q^2}{q^2a/c}{q^2a}_{s}\!,&\qqdn n=1+2s.
\end{cases}
\end{eqnarray*}
\end{exam}

\begin{thm}\label{thm-a}
For two complex numbers $\{a,c\}$ and two nonnegative integers
$\{\ell,m\}$, there holds
 \begin{eqnarray*}
&&\xqdn{_4\phi_3}\ffnk{ccccc}{q;q}{q^{-n},q^{1+\ell+n}a,\sqrt{c},-q^{m}\!\sqrt{c}}
{q\sqrt{a},-q^{1+\ell}\!\sqrt{a},q^{m}c}=
 \ffnk{ccccc}{q}{q^{1+\ell+n}a,q^{1+n}\!\sqrt{a}}{q^{1+\ell+2n}a,q\sqrt{a}}_{\ell}
 \\&&\xqdn\:\:=\:
 \sum_{i=0}^{\ell}\sum_{j=0}^{m}q^{2i+(m+n-i)j-\binom{j}{2}}\frac{c^{(n-i)/2}}{a^{i/2}}
 \frac{1-q^{1+2\ell+2n-2i}a}{1-q^{1+2\ell+2n}a}
\ffnk{ccccc}{q}{q^{-\ell},q^{-n},q^{-2\ell-2n-1}/a}
{q,q^{-\ell-2n}/a,q^{-2\ell-n}/a}_i
 \\&&\xqdn\:\:\times\:\ffnk{ccccc}{q}{q^{-m},q^{i-n},q^{1+2\ell+n-i}a,\sqrt{c},-\sqrt{c}}
{q,q^{1+\ell}\!\sqrt{a},-q^{1+\ell}\!\sqrt{a},q^{m}c,q^{j-1}c}_j
\ffnk{ccccc}{q^2}{q,q^{2+2\ell}a/c}{q^{2+2\ell+2j}a,q^{1+2j}c}_{\frac{n-i-j}{2}}\chi(n-i-j).
 \end{eqnarray*}
\end{thm}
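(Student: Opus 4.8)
The plan is to imitate the proof of Proposition~\ref{prop-a}, but to carry out the series-rearrangement on the ``$a$-side'' so as to absorb the new parameter $\ell$, and then to finish by invoking Proposition~\ref{prop-a} itself in place of the bare $q$-Andrews--Watson evaluation~\eqref{andrews-watson}. The starting observation is that the left-hand side of Theorem~\ref{thm-a} differs from that of Proposition~\ref{prop-a} only in that the numerator entry $q^{1+n}a$ has become $q^{1+\ell+n}a$ and the denominator entry $-q\sqrt a$ has become $-q^{1+\ell}\sqrt a$, while $q\sqrt a$ is untouched. Since the $m$-expansion in Proposition~\ref{prop-a} is exactly the device that symmetrises the ``$c$-side'' (replacing the asymmetric triple $\sqrt c,\,-q^m\sqrt c\,/\,q^m c$ by the symmetric $\sqrt c,\,-\sqrt c\,/\,c$), I expect an $\ell$-expansion to symmetrise the ``$a$-side'', replacing the asymmetric pair $q\sqrt a,\,-q^{1+\ell}\sqrt a$ by the symmetric $q^{1+\ell}\sqrt a,\,-q^{1+\ell}\sqrt a$, at the cost of a finite sum over a new index $i$ with $0\le i\le\ell$.

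Concretely, I would first specialise the terminating very-well-poised ${}_6\phi_5$-summation~\eqref{terminating-65} with $b\to q^{-k}$, so that the resulting identity becomes coupled to the summation index $k$ of the outer ${}_4\phi_3$, and with the two remaining free parameters tuned to the $a$-entries $q^{1+\ell+n}a$, $q\sqrt a$ and $-q^{1+\ell}\sqrt a$. This produces the $a$-analogue of the unit identity displayed in the proof of Proposition~\ref{prop-a}; unlike the $c$-side, its closed form should not collapse to $1$ but is expected to supply exactly the quotient $\frac{(q^{1+\ell+n}a;q)_\ell(q^{1+n}\sqrt a;q)_\ell}{(q^{1+\ell+2n}a;q)_\ell(q\sqrt a;q)_\ell}$ recorded as a prefactor in the statement. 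Substituting this identity into every term of the defining series, interchanging the $i$- and $k$-summations, and then shifting $k\to i+t$, the inner $t$-sum should simplify to the left-hand side of Proposition~\ref{prop-a} with the replacements $n\mapsto n-i$ and $a\mapsto q^{2\ell}a$ (keeping $c$ and $m$ fixed); indeed $q^{1+(n-i)}\,q^{2\ell}a=q^{1+2\ell+n-i}a$ and $q\sqrt{q^{2\ell}a}=q^{1+\ell}\sqrt a$ match precisely the entries appearing inside the stated double sum. Applying Proposition~\ref{prop-a} to this inner series then generates the $j$-summation, the truncation factor $\chi(n-i-j)$, and the base-$q^2$ shifted-factorial quotient with numerator entries $q,\,q^{2+2\ell}a/c$ and denominator entries $q^{2+2\ell+2j}a,\,q^{1+2j}c$ evaluated at index $(n-i-j)/2$, exactly as required.

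The main obstacle will be the bookkeeping in the $a$-side specialisation. One must choose the two free ${}_6\phi_5$-parameters so that, simultaneously, (i) the very-well-poised ratio reduces to the factor $\frac{1-q^{1+2\ell+2n-2i}a}{1-q^{1+2\ell+2n}a}$ appearing in the theorem---and since the natural very-well-poised ratio has the form $\frac{1-a'q^{2i}}{1-a'}$, this forces a reversal and reindexing of the internal ${}_6\phi_5$-sum---and (ii) the closed form of~\eqref{terminating-65} collapses to the prefactor above. Once these choices are pinned down, what remains is the routine but lengthy verification that the accumulated powers of $q$ assemble into the exponent $2i+(m+n-i)j-\binom{j}{2}$ and that the half-integer powers combine into $c^{(n-i)/2}/a^{i/2}$; this is the same manipulation already performed in passing from~\eqref{equation-a} to the statement of Proposition~\ref{prop-a}, now executed one level higher.
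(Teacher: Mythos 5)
Your plan is essentially the paper's own proof: the authors likewise specialise the terminating ${}_6\phi_5$ summation \eqref{terminating-65} (with $m\to\ell$, $a\to q^{-2\ell-2n-1}/a$, $b\to q^{k-n}$, $c\to q^{-\ell-n}/\sqrt a$) to obtain a $k$-coupled partition of unity on the $a$-side, insert it into the series, interchange the $i$- and $k$-sums, and recognise the inner sum as the left side of Proposition \ref{prop-a} with $n\to n-i$, $a\to q^{2\ell}a$ (their equation \eqref{equation-c}), which they then evaluate by Proposition \ref{prop-a}. The only cosmetic deviations are your choice $b\to q^{-k}$ in place of $b\to q^{k-n}$ and the shift $k\to i+t$, where the paper instead absorbs the factor $(q^{k-n};q)_i$ via $(q^{-n};q)_k(q^{k-n};q)_i=(q^{-n};q)_i(q^{i-n};q)_k$; the very-well-poised ratio comes out as $q^{2i}\,\frac{1-q^{1+2\ell+2n-2i}a}{1-q^{1+2\ell+2n}a}$ directly from the substitution, with no reversal needed.
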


\begin{proof}
 Performing the replacements $m\to \ell$, $a\to q^{-2\ell-2n-1}/a$, $b\to q^{k-n}$, $c\to q^{-\ell-n}/\sqrt{a}$
 in \eqref{terminating-65}, we get the equation
 \begin{eqnarray*}
&&\ffnk{ccccc}{q}{q^{1+\ell+n}a,q^{1+n}\!\sqrt{a}}{q^{1+\ell+2n}a,q^{1+k}\!\sqrt{a}}_{\ell}
 \sum_{i=0}^{\ell}\frac{q^{2i}}{a^{i/2}}\frac{1-q^{1+2\ell+2n-2i}a}{1-q^{1+2\ell+2n}a}
\ffnk{ccccc}{q}{q^{-\ell},q^{k-n},q^{-2\ell-2n-1}/a}{q,q^{-\ell-2n}/a,q^{-2\ell-n}/a}_i
\\&&\:\,\times\:\ffnk{ccccc}{q}{q^{\ell+n+k+1}a}{q^{\ell+n+1}a}_{\ell-i}=1.
 \end{eqnarray*}
 Then there exists the following relation
\begin{eqnarray*}
&&\xqdn{_4\phi_3}\ffnk{ccccc}{q;q}{q^{-n},q^{1+\ell+n}a,\sqrt{c},-q^{v}\!\sqrt{c}}
{q\sqrt{a},-q^{1+\ell}\!\sqrt{a},q^{v}c}
 =\sum_{k=0}^n\ffnk{ccccc}{q}{q^{-n},q^{1+\ell+n}a,\sqrt{c},-q^{v}\!\sqrt{c}}
{q,q\sqrt{a},-q^{1+\ell}\!\sqrt{a},q^{v}c}_kq^k\\
&&\xqdn\:\:=\:\sum_{k=0}^n\ffnk{ccccc}{q}{q^{-n},q^{1+\ell+n}a,\sqrt{c},-q^{v}\!\sqrt{c}}
{q,q\sqrt{a},-q^{1+\ell}\!\sqrt{a},q^{v}c}_kq^k
\ffnk{ccccc}{q}{q^{1+\ell+n}a,q^{1+n}\!\sqrt{a}}{q^{1+\ell+2n}a,q^{1+k}\!\sqrt{a}}_{\ell}
\\&&\xqdn\:\:\times\:\sum_{i=0}^{\ell}\frac{q^{2i}}{a^{i/2}}\frac{1-q^{1+2\ell+2n-2i}a}{1-q^{1+2\ell+2n}a}
\ffnk{ccccc}{q}{q^{-\ell},q^{k-n},q^{-2\ell-2n-1}/a}{q,q^{-\ell-2n}/a,q^{-2\ell-n}/a}_i
\ffnk{ccccc}{q}{q^{\ell+n+k+1}a}{q^{\ell+n+1}a}_{\ell-i}\\
\\&&\xqdn\:\:=\:\ffnk{ccccc}{q}{q^{1+\ell+n}a,q^{1+n}\!\sqrt{a}}{q^{1+\ell+2n}a,q\sqrt{a}}_{\ell}
\sum_{i=0}^{\ell}\frac{q^{2i}}{a^{i/2}}\frac{1-q^{1+2\ell+2n-2i}a}{1-q^{1+2\ell+2n}a}
\ffnk{ccccc}{q}{q^{-\ell},q^{-2\ell-2n-1}/a}{q,q^{-\ell-2n}/a,q^{-2\ell-n}/a}_i
\\&&\xqdn\:\:\times\:\sum_{k=0}^n\ffnk{ccccc}{q}{q^{-n},q^{1+\ell+n}a,\sqrt{c},-q^{v}\!\sqrt{c}}
{q,q\sqrt{a},-q^{1+\ell}\!\sqrt{a},q^{v}c}_kq^k\frac{(q\sqrt{a};q)_{\ell}(q^{k-n};q)_i}{(q^{1+k}\!\sqrt{a};q)_{\ell}}
 \ffnk{ccccc}{q}{q^{\ell+n+k+1}a}{q^{\ell+n+1}a}_{\ell-i}.
 \end{eqnarray*}
After some routine simplification, the result reads as
  \begin{eqnarray}
&&\xqdn{_4\phi_3}\ffnk{ccccc}{q;q}{q^{-n},q^{1+\ell+n}a,\sqrt{c},-q^{v}\!\sqrt{c}}
{q\sqrt{a},-q^{1+\ell}\!\sqrt{a},q^{v}c}
=\ffnk{ccccc}{q}{q^{1+\ell+n}a,q^{1+n}\!\sqrt{a}}{q^{1+\ell+2n}a,q\sqrt{a}}_{\ell}
 \nonumber\\\nonumber&&\xqdn\:\:\times\:
 \sum_{i=0}^{\ell}\frac{q^{2i}}{a^{i/2}}\frac{1-q^{1+2\ell+2n-2i}a}{1-q^{1+2\ell+2n}a}
\ffnk{ccccc}{q}{q^{-\ell},q^{-n},q^{-2\ell-2n-1}/a}{q,q^{-\ell-2n}/a,q^{-2\ell-n}/a}_i
\\&&\xqdn\:\:\times\:{_4\phi_3}\ffnk{ccccc}{q;q}{q^{i-n},q^{1+2\ell+n-i}a,\sqrt{c},-q^{v}\!\sqrt{c}}
{q^{1+\ell}\!\sqrt{a},-q^{1+\ell}\!\sqrt{a},q^{v}c}.
\label{equation-c}
 \end{eqnarray}
Setting $v=m$ in \eqref{equation-c} and evaluating the
$_4\phi_3$-series on the right hand side by Proposition
\ref{prop-a}, we finish the proof of Theorem \ref{thm-a}.
\end{proof}

\begin{exam}[$\ell=1,m=0$ in Theorem \ref{thm-a}]
\begin{eqnarray*}
 &&\xqdn\xxqdn{_4\phi_3}\ffnk{ccccc}{q;q}{q^{-n},q^{2+n}a,\sqrt{c},-\sqrt{c}}
{q\sqrt{a},-q^2\!\sqrt{a},c}\\
&&\xqdn\xxqdn\:=\:
\begin{cases}
\frac{(1+q\sqrt{a})c^s}{1+q^{1+2s}\sqrt{a}}\ffnk{ccccc}{q^2}{q,q^4a/c}{q^{2}a,qc}_s,&\qqdn
n=2s;
\\[4mm]
\frac{(q^2-q)\sqrt{a}\,c^s}{(1-q\sqrt{a})(1+q^{2+2s}\!\sqrt{a})}\ffnk{ccccc}{q^2}{q^3,q^4a/c}{q^{4}a,qc}_s\!,&\qqdn
n=1+2s.
\end{cases}
 \end{eqnarray*}
\end{exam}

\begin{exam}[$\ell=1,m=1$ in Theorem \ref{thm-a}]
 \begin{eqnarray*}
 &&{_4\phi_3}\ffnk{ccccc}{q;q}{q^{-n},q^{2+n}a,\sqrt{c},-q\sqrt{c}}
{q\sqrt{a},-q^2\!\sqrt{a},qc}\\
&&\:=\:
\begin{cases}
\frac{(1+q\sqrt{a})(\sqrt{c}+q^{1+2s}\!\sqrt{a})c^s}{(\sqrt{c}+q\sqrt{a})(1+q^{1+2s}\!\sqrt{a})}\ffnk{ccccc}{q^2}{q,q^2a/c}{q^{2}a,qc}_s,&\qqdn
n=2s;
\\[4mm]
\frac{(1-q)(\sqrt{c}-q\sqrt{a})(1+q^{2+2s}\!\sqrt{ac})c^s}{(1-q\sqrt{a})(1-qc)(1+q^{2+2s}\!\sqrt{a})}\ffnk{ccccc}{q^2}{q^3,q^4a/c}{q^{4}a,q^3c}_s\!,&\qqdn
n=1+2s.
\end{cases}
 \end{eqnarray*}
\end{exam}

Replacing $\sqrt{a}$ by $-q^{-\ell}\!\sqrt{a}$ in Theorem
\ref{thm-a}, we obtain the equation
  \begin{eqnarray}
&&\xqdn{_4\phi_3}\ffnk{ccccc}{q;q}{q^{-n},q^{1-\ell+n}a,\sqrt{c},-q^{m}\!\sqrt{c}}
{q\sqrt{a},-q^{1-\ell}\!\sqrt{a},q^{m}c}=
 \ffnk{ccccc}{q}{q^{-n}/a,-q^{-n}/\sqrt{a}}{q^{-2n}/a,-1/\sqrt{a}}_{\ell}
 \nonumber\\\nonumber&&\xqdn\:\:=\:
 \sum_{i=0}^{\ell}\sum_{j=0}^{m}(-1)^iq^{(\ell+2)i+(m+n-i)j-\binom{j}{2}}\frac{c^{(n-i)/2}}{a^{i/2}}
 \frac{1-q^{1+2n-2i}a}{1-q^{1+2n}a}
\ffnk{ccccc}{q}{q^{-\ell},q^{-n},q^{-2n-1}/a}
{q,q^{-n}/a,q^{\ell-2n}/a}_i
 \nonumber\\\label{equivalence-a}
 &&\xqdn\:\:\times\:\ffnk{ccccc}{q}{q^{-m},q^{i-n},q^{1+n-i}a,\sqrt{c},-\sqrt{c}}
{q,q\sqrt{a},-q\sqrt{a},q^{m}c,q^{j-1}c}_j
\ffnk{ccccc}{q^2}{q,q^2a/c}{q^{2+2j}a,q^{1+2j}c}_{\frac{n-i-j}{2}}\chi(n-i-j).
  \end{eqnarray}
Employing the substitution $\sqrt{c}\to-q^{-m}\!\sqrt{c}$ in Theorem
\ref{thm-a}, we get the formula
  \begin{eqnarray}
&&\xqdn{_4\phi_3}\ffnk{ccccc}{q;q}{q^{-n},q^{1+\ell+n}a,\sqrt{c},-q^{-m}\!\sqrt{c}}
{q\sqrt{a},-q^{1+\ell}\!\sqrt{a},q^{-m}c}=
 \ffnk{ccccc}{q}{q^{1+\ell+n}a,q^{1+n}\!\sqrt{a}}{q^{1+\ell+2n}a,q\sqrt{a}}_{\ell}
 \nonumber\\\nonumber&&\xqdn\:\:=\:
 \sum_{i=0}^{\ell}\sum_{j=0}^{m}(-1)^jq^{2i+(n-i-j)(j-m)+\binom{j+1}{2}}\frac{c^{(n-i)/2}}{a^{i/2}}
 \frac{1-q^{1+2\ell+2n-2i}a}{1-q^{1+2\ell+2n}a}
 \nonumber\\\nonumber&&\xqdn\:\:\times\:\ffnk{ccccc}{q}{q^{-\ell},q^{-n},q^{-2\ell-2n-1}/a}
{q,q^{-\ell-2n}/a,q^{-2\ell-n}/a}_i
\ffnk{ccccc}{q}{q^{-m},q^{i-n},q^{1+2\ell+n-i}a,q^{-m}\!\sqrt{c},-q^{-m}\!\sqrt{c}}
{q,q^{1+\ell}\!\sqrt{a},-q^{1+\ell}\!\sqrt{a},q^{-m}c,q^{j-2m-1}c}_j
\\\label{equivalence-b}&&\xqdn\:\:\times\:
\ffnk{ccccc}{q^2}{q,q^{2+2\ell+2m}a/c}{q^{2+2\ell+2j}a,q^{1-2m+2j}c}_{\frac{n-i-j}{2}}\chi(n-i-j).
\end{eqnarray}
Letting
$\sqrt{a}\to-q^{-\ell}\!\sqrt{a},\sqrt{c}\to-q^{-m}\!\sqrt{c}$ in
Theorem \ref{thm-a}, we obtain the result
 \begin{eqnarray}
&&\xqdn{_4\phi_3}\ffnk{ccccc}{q;q}{q^{-n},q^{1-\ell+n}a,\sqrt{c},-q^{-m}\!\sqrt{c}}
{q\sqrt{a},-q^{1-\ell}\!\sqrt{a},q^{-m}c}=
 \ffnk{ccccc}{q}{q^{-n}/a,-q^{-n}/\sqrt{a}}{q^{-2n}/a,-1/\sqrt{a}}_{\ell}
 \nonumber\\\nonumber&&\xqdn\:\:=\:
 \sum_{i=0}^{\ell}\sum_{j=0}^{m}(-1)^{i+j}q^{(\ell+2)i+(n-i-j)(j-m)+\binom{j+1}{2}}\frac{c^{(n-i)/2}}{a^{i/2}}
 \frac{1-q^{1+2n-2i}a}{1-q^{1+2n}a}
 \nonumber\\\nonumber&&\xqdn\:\:\times\:
\ffnk{ccccc}{q}{q^{-\ell},q^{-n},q^{-2n-1}/a}{q,q^{-n}/a,q^{\ell-2n}/a}_i
 \ffnk{ccccc}{q}{q^{-m},q^{i-n},q^{1+n-i}a,q^{-m}\!\sqrt{c},-q^{-m}\!\sqrt{c}}
{q,q\sqrt{a},-q\sqrt{a},q^{-m}c,q^{j-2m-1}c}_j
 \\\label{equivalence-c}&&\xqdn\:\:\times\:
\ffnk{ccccc}{q^2}{q,q^{2+2m}a/c}{q^{2+2j}a,q^{1-2m+2j}c}_{\frac{n-i-j}{2}}\chi(n-i-j).
\end{eqnarray}

\textbf{Remark}: Theorem \ref{thm-a}, \eqref{equivalence-a},
\eqref{equivalence-b} and \eqref{equivalence-c} are equivalent to
each other, but the corresponding hypergeometric series identities
 are essentially different.

\subsection{Extensions of Jain's $q$-Watson formula}
In this subsection, we prove some two-parameter extensions of
equation \eqref{jain-watson}. We start with the following
one-parameter extension.

\begin{prop}\label{prop-b}
For two complex numbers $\{a,c\}$ and a nonnegative integer $m$ with
$m\leq n$, there holds
  \begin{eqnarray*}
&&{_4\phi_3}\ffnk{ccccc}{q;q}{a,c,q^{-n},-q^{m-n}}
{\sqrt{qac},-\sqrt{qac},q^{m-2n}}\\&&\:\:=\:\sum_{j=0}^{m}
q^{(m-n)j+\binom{j+1}{2}} \ffnk{ccccc}{q}{q^{-m},q^{-n},-q^{-n},a,c}
{q,\sqrt{qac},-\sqrt{qac},q^{m-2n},q^{j-2n-1}}_j
        \nonumber  \\&&\:\:\times\:
\ffnk{ccccc}{q^2}{q^{1+j}a,q^{1+j}c}{q,q^{1+2j}ac}_{n-j}.
  \end{eqnarray*}
\end{prop}

\begin{proof}
Performing the replacement $a\to q^{-1-n}a$ in \eqref{equation-a},
we have
\begin{eqnarray}
&&{_4\phi_3}\ffnk{ccccc}{q;q}{q^{-n},a,\sqrt{c},-q^{m}\!\sqrt{c}}
{\sqrt{q^{1-n}a},-\sqrt{q^{1-n}a},q^{m}c}
 \nonumber\\\nonumber&&\:\:=\!\:
\sum_{j=0}^mq^{mj+\binom{j+1}{2}}c^{j/2}\ffnk{ccccc}{q}{q^{-m},q^{-n},a,\sqrt{c},-\sqrt{c}}
{q,\sqrt{q^{1-n}a},-\sqrt{q^{1-n}a},q^{m}c,q^{j-1}c}_j
\\\label{equivalence-d}&&\:\:\times\,\:
 {_4\phi_3}\ffnk{ccccc}{q;q}{q^{j-n},q^{j}a,q^j\!\sqrt{c},-q^j\!\sqrt{c}}
{q^{j}\!\sqrt{q^{1-n}a},-q^{j}\!\sqrt{q^{1-n}a},q^{2j}c}.
\end{eqnarray}
Replacing $c$ and $q^{-n}$ by $q^{-2n}$ and $c$, respectively, in
\eqref{equivalence-d}, the result reads as
\begin{eqnarray*}
 &&{_4\phi_3}\ffnk{ccccc}{q;q}{a,c,q^{-n},-q^{m-n}}
{\sqrt{qac},-\sqrt{qac},q^{m-2n}}\\&&\:\:=\:\sum_{j=0}^{m}q^{(m-n)j+\binom{j+1}{2}}
\ffnk{ccccc}{q}{q^{-m},q^{-n},-q^{-n},a,c}
{q,\sqrt{qac},-\sqrt{qac},q^{m-2n},q^{j-2n-1}}_j
        \nonumber  \\&&\:\:\times\:\,
{_4\phi_3}\ffnk{ccccc}{q;q}{q^{j}a,q^{j}c,q^{j-n},-q^{j-n}}
{q^{j}\!\sqrt{qac},-q^{j}\!\sqrt{qac},q^{2j-2n}}.
\end{eqnarray*}
Calculating the $_4\phi_3$-series on the last line by
\eqref{jain-watson}, we establish the proposition.
\end{proof}

\begin{exam}[$m=1$ in Proposition \ref{prop-b}: $n\geq1$]
\begin{eqnarray*}
\qquad\qquad\:\:\:
{_4\phi_3}\ffnk{ccccc}{q;q}{a,c,q^{-n},-q^{1-n}}{\sqrt{qac},-\sqrt{qac},q^{1-2n}}=
\ffnk{ccccc}{q^2}{qa,qc}{q,qac}_{n}+q^n\ffnk{ccccc}{q^2}{a,c}{q,qac}_{n}.
\end{eqnarray*}
\end{exam}

\begin{prop}\label{prop-c}
For two complex numbers $\{a,c\}$ and a nonnegative integer $m$,
there holds
 \begin{eqnarray*}
&&{_4\phi_3}\ffnk{ccccc}{q;q}{a,c,q^{-n},-q^{-m-n}}
{\sqrt{qac},-\sqrt{qac},q^{-m-2n}}\\&&\:\:=\:\sum_{j=0}^{m}(-1)^j
q^{\binom{j+1}{2}-nj} \ffnk{ccccc}{q}{q^{-m},q^{-m-n},-q^{-m-n},a,c}
{q,\sqrt{qac},-\sqrt{qac},q^{-m-2n},q^{j-2m-2n-1}}_j
        \nonumber  \\&&\:\:\times\:
\ffnk{ccccc}{q^2}{q^{1+j}a,q^{1+j}c}{q,q^{1+2j}ac}_{m+n-j}.
 \end{eqnarray*}
\end{prop}

\begin{proof}
Employing the substitution $\sqrt{c}\to-q^{-m}\!\sqrt{c}$ in
\eqref{equivalence-d}, we get
  \begin{eqnarray*}
&&{_4\phi_3}\ffnk{ccccc}{q;q}{q^{-n},a,\sqrt{c},-q^{-m}\!\sqrt{c}}
{\sqrt{q^{1-n}a},-\sqrt{q^{1-n}a},q^{-m}c}\nonumber\\&&\:\:=\!\:
\sum_{j=0}^m(-1)^j
 q^{\binom{j+1}{2}}c^{j/2}\ffnk{ccccc}{q}{q^{-m},q^{-n},a,q^{-m}\!\sqrt{c},-q^{-m}\!\sqrt{c}}
{q,\sqrt{q^{1-n}a},-\sqrt{q^{1-n}a},q^{-m}c,q^{j-2m-1}c}_j
\nonumber\\&&\:\:\times\,\:{_4\phi_3}\ffnk{ccccc}{q;q}{q^{j-n},q^{j}a,q^{j-m}\!\sqrt{c},-q^{j-m}\!\sqrt{c}}
{q^{j}\!\sqrt{q^{1-n}a},-q^{j}\!\sqrt{q^{1-n}a},q^{2j-2m}c}.
\end{eqnarray*}
Letting $c\to q^{-2n}$, $q^{-n}\to c$ in the last relation, the
result reads as
\begin{eqnarray*}
 &&{_4\phi_3}\ffnk{ccccc}{q;q}{a,c,q^{-n},-q^{-m-n}}
{\sqrt{qac},-\sqrt{qac},q^{-m-2n}}\\&&\:\:=\:\sum_{j=0}^{m}(-1)^jq^{\binom{j+1}{2}-nj}
\ffnk{ccccc}{q}{q^{-m},q^{-m-n},-q^{-m-n},a,c}
{q,\sqrt{qac},-\sqrt{qac},q^{-m-2n},q^{j-2m-2n-1}}_j
        \nonumber  \\&&\:\:\times\:\,
{_4\phi_3}\ffnk{ccccc}{q;q}{q^{j}a,q^{j}c,q^{j-m-n},-q^{j-m-n}}
{q^{j}\!\sqrt{qac},-q^{j}\!\sqrt{qac},q^{2j-2m-2n}}.
\end{eqnarray*}
Evaluating the $_4\phi_3$-series on the last line by
\eqref{jain-watson}, we establish the proposition.
\end{proof}

\begin{exam}[$m=1$ in Proposition \ref{prop-c}]
 \begin{eqnarray*}
&&{_4\phi_3}\ffnk{ccccc}{q;q}{a,c,q^{-n},-q^{-1-n}}{\sqrt{qac},-\sqrt{qac},q^{-1-2n}}
\\&&\:\:=\:
\ffnk{ccccc}{q^2}{qa,qc}{q,qac}_{n+1}-q^{n+1}\ffnk{ccccc}{q^2}{a,c}{q,qac}_{n+1}.
\end{eqnarray*}
\end{exam}

\begin{thm}\label{thm-b}
For two complex numbers $\{a,c\}$ and two nonnegative integers
$\{\ell, m\}$ with $m\leq n$, there holds
  \begin{eqnarray*}
&&{_4\phi_3}\ffnk{ccccc}{q;q}{q^{\ell}a,c,q^{-n},-q^{m-n}}
{\sqrt{qac},-q^{\ell}\!\sqrt{qac},q^{m-2n}}
=\ffnk{ccccc}{q}{q^{\ell}a,\sqrt{qa/c}}{q^{\ell}a/c,\sqrt{qac}}_{\ell}
\\&&\:\:\times\:
\sum_{i=0}^{\ell}\sum_{j=0}^{m}q^{\frac{5i}{2}+(m-n)j+\binom{j+1}{2}}\frac{1}{(ac)^{i/2}}
 \frac{1-q^{2\ell-2i}a/c}{1-q^{2\ell}a/c}
\ffnk{ccccc}{q}{q^{-\ell},c,q^{-2\ell}c/a}{q,q^{1-2\ell}/a,q^{1-\ell}c/a}_i
 \\&&\:\:\times\:\ffnk{ccccc}{q}{q^{-m},q^{-n},-q^{-n},q^{2\ell-i}a,q^ic}
{q,q^{\ell}\!\sqrt{qac},-q^{\ell}\!\sqrt{qac},q^{m-2n},q^{j-2n-1}}_j
\ffnk{ccccc}{q^2}{q^{1+2\ell-i+j}a,q^{1+i+j}c}{q,q^{1+2\ell+2j}ac}_{n-j}.
 \end{eqnarray*}
\end{thm}

\begin{proof}
Performing the replacement $a\to q^{-n-1}a$ in \eqref{equation-c},
we obtain
  \begin{eqnarray*}
&&\xqdn{_4\phi_3}\ffnk{ccccc}{q;q}{q^{-n},q^{\ell}a,\sqrt{c},-q^{v}\!\sqrt{c}}
{\sqrt{q^{1-n}a},-q^{\ell}\!\sqrt{q^{1-n}a},q^{v}c}
=\ffnk{ccccc}{q}{q^{\ell}a,\sqrt{q^{1+n}a}}{q^{\ell+n}a,\sqrt{q^{1-n}a}}_{\ell}
 \\&&\xqdn\:\:\times\:
 \sum_{i=0}^{\ell}\frac{q^{(5+n)i/2}}{a^{i/2}}\frac{1-q^{2\ell+n-2i}a}{1-q^{2\ell+n}a}
\ffnk{ccccc}{q}{q^{-\ell},q^{-n},q^{-2\ell-n}/a}{q,q^{1-\ell-n}/a,q^{1-2\ell}/a}_i
\\&&\xqdn\:\:\times\:{_4\phi_3}\ffnk{ccccc}{q;q}{q^{i-n},q^{2\ell-i}a,\sqrt{c},-q^{v}\!\sqrt{c}}
{q^{\ell}\!\sqrt{q^{1-n}a},-q^{\ell}\!\sqrt{q^{1-n}a},q^{v}c}.
\end{eqnarray*}
Replacing $q^{-n}$ and $c$ by $c$ and $q^{-2n}$, respectively, in
the last relation, the result reads as
\begin{eqnarray}
&&\xqdn{_4\phi_3}\ffnk{ccccc}{q;q}{q^{\ell}a,c,q^{-n},-q^{v-n}}
{\sqrt{qac},-q^{\ell}\!\sqrt{qac},q^{v-2n}}
=\ffnk{ccccc}{q}{q^{\ell}a,\sqrt{qa/c}}{q^{\ell}a/c,\sqrt{qac}}_{\ell}
 \nonumber\\\nonumber&&\xqdn\:\:\times\:
 \sum_{i=0}^{\ell}\frac{q^{5i/2}}{(ac)^{i/2}}\frac{1-q^{2\ell-2i}a/c}{1-q^{2\ell}a/c}
\ffnk{ccccc}{q}{q^{-\ell},c,q^{-2\ell}c/a}{q,q^{1-2\ell}/a,q^{1-\ell}c/a}_i
\\&&\xqdn\:\:\times\:{_4\phi_3}\ffnk{ccccc}{q;q}{q^{2\ell-i}a,q^ic,q^{-n},-q^{v-n}}
{q^{\ell}\!\sqrt{qac},-q^{\ell}\!\sqrt{qac},q^{v-2n}}.
\label{equation-aa}
\end{eqnarray}
Taking $v=m$ in \eqref{equation-aa} and calculating the
$_4\phi_3$-series on the right hand side by Proposition
\ref{prop-b}, we establish the theorem.
\end{proof}

\begin{exam}[$\ell=1,m=0$ in Theorem \ref{thm-b}]
\begin{eqnarray*}
&&\xxqdn\qqdn\:{_4\phi_3}\ffnk{ccccc}{q;q}{qa,c,q^{-n},-q^{-n}}{\sqrt{qac},-\sqrt{q^3ac},q^{-2n}}\\
&&\xxqdn\qqdn\:\:\:=\:\frac{1-qa}{(1-\sqrt{qac})(1+\sqrt{qa/c})}\ffnk{ccccc}{q^2}{q^3a,qc}{q,q^3ac}_{n}\\
&&\xxqdn\qqdn\:\:\:+\:\,\frac{1-c}{(1-\sqrt{qac})(1+\sqrt{c/qa})}\ffnk{ccccc}{q^2}{q^2a,q^2c}{q,q^3ac}_{n}.
\end{eqnarray*}
\end{exam}

\begin{exam}[$\ell=1,m=1$ in Theorem \ref{thm-b}: $n\geq1$]
\begin{eqnarray*}
&&{_4\phi_3}\ffnk{ccccc}{q;q}{qa,c,q^{-n},-q^{1-n}}{\sqrt{qac},-\sqrt{q^3ac},q^{1-2n}}\\
&&\:\:=\:\frac{(1-\sqrt{q^{1+2n}ac})(1+\sqrt{q^{1+2n}a/c})}{(1-\sqrt{qac})(1+\sqrt{qa/c})}\ffnk{ccccc}{q^2}{qa,qc}{q,q^3ac}_{n}\\
&&\:\:+\:\,\frac{(1-\sqrt{q^{1+2n}ac})(q^n+\sqrt{qa/c})}{(1-\sqrt{qac})(1+\sqrt{qa/c})}\ffnk{ccccc}{q^2}{q^2a,c}{q,q^3ac}_{n}.
\end{eqnarray*}
\end{exam}

Employing the substitution $\sqrt{a}\to-q^{-\ell}\!\sqrt{a}$ in
Theorem \ref{thm-b}, we obtain the equation
\begin{eqnarray}
&&\xxqdn{_4\phi_3}\ffnk{ccccc}{q;q}{q^{-\ell}a,c,q^{-n},-q^{m-n}}
{\sqrt{qac},-q^{-\ell}\!\sqrt{qac},q^{m-2n}}
=\ffnk{ccccc}{q}{q/a,-\sqrt{qc/a}}{qc/a,-\sqrt{q/ac}}_{\ell}
\nonumber\\\nonumber&&\xxqdn\:\:\times\:
\sum_{i=0}^{\ell}\sum_{j=0}^{m}(-1)^iq^{\frac{(1+2\ell)i}{2}+(m-n)j+\binom{j+1}{2}}\frac{1}{(ac)^{i/2}}
 \frac{1-q^{2i}c/a}{1-c/a}
\ffnk{ccccc}{q}{q^{-\ell},c,c/a}{q,q/a,q^{1+\ell}c/a}_i
 \nonumber\\\label{equivalence-e}
 &&\xxqdn\:\:\times\:\ffnk{ccccc}{q}{q^{-m},q^{-n},-q^{-n},q^{-i}a,q^ic}
{q,\sqrt{qac},-\sqrt{qac},q^{m-2n},q^{j-2n-1}}_j
\ffnk{ccccc}{q^2}{q^{1-i+j}a,q^{1+i+j}c}{q,q^{1+2j}ac}_{n-j}.
\end{eqnarray}

Setting $v=-m$ in \eqref{equation-aa} and evaluating the
$_4\phi_3$-series on the right hand side by Proposition
\ref{prop-c}, we get the following theorem.

\begin{thm}\label{thm-c}
For two complex numbers $\{a,c\}$ and two nonnegative integers
$\{\ell,m\}$, there holds
 \begin{eqnarray*}
&&{_4\phi_3}\ffnk{ccccc}{q;q}{q^{\ell}a,c,q^{-n},-q^{-m-n}}
{\sqrt{qac},-q^{\ell}\!\sqrt{qac},q^{-m-2n}}
=\ffnk{ccccc}{q}{q^{\ell}a,\sqrt{qa/c}}{q^{\ell}a/c,\sqrt{qac}}_{\ell}
\\&&\:\:\times\:
\sum_{i=0}^{\ell}\sum_{j=0}^{m}(-1)^jq^{\frac{5i}{2}-nj+\binom{j+1}{2}}\frac{1}{(ac)^{i/2}}
 \frac{1-q^{2\ell-2i}a/c}{1-q^{2\ell}a/c}
\ffnk{ccccc}{q}{q^{-\ell},c,q^{-2\ell}c/a}{q,q^{1-2\ell}/a,q^{1-\ell}c/a}_i
 \\&&\:\:\times\:\ffnk{ccccc}{q}{q^{-m},q^{-m-n},-q^{-m-n},q^{2\ell-i}a,q^ic}
{q,q^{\ell}\!\sqrt{qac},-q^{\ell}\!\sqrt{qac},q^{-m-2n},q^{j-2m-2n-1}}_j
\ffnk{ccccc}{q^2}{q^{1+2\ell-i+j}a,q^{1+i+j}c}{q,q^{1+2\ell+2j}ac}_{m+n-j}.
 \end{eqnarray*}
\end{thm}

\begin{exam}[$\ell=1,m=1$ in Theorem \ref{thm-c}]
\begin{eqnarray*}
&&\xxqdn{_4\phi_3}\ffnk{ccccc}{q;q}{qa,c,q^{-n},-q^{-1-n}}{\sqrt{qac},-\sqrt{q^3ac},q^{-1-2n}}\\
&&\xxqdn\:\:=\:\frac{(1+\sqrt{q^{3+2n}ac})(1-\sqrt{q^{3+2n}a/c})}{(1-\sqrt{qac})(1+\sqrt{qa/c})}\ffnk{ccccc}{q^2}{qa,qc}{q,q^3ac}_{1+n}\\
&&\xxqdn\:\:-\:\,\frac{(1+\sqrt{q^{3+2n}ac})(q^{1+n}-\sqrt{qa/c})}{(1-\sqrt{qac})(1+\sqrt{qa/c})}\ffnk{ccccc}{q^2}{q^2a,c}{q,q^3ac}_{1+n}.
  \end{eqnarray*}
\end{exam}

Letting $\sqrt{a}\to-q^{-\ell}\!\sqrt{a}$ in Theorem \ref{thm-c}, we
obtain the formula
\begin{eqnarray}
&&\xqdn\qqdn{_4\phi_3}\ffnk{ccccc}{q;q}{q^{-\ell}a,c,q^{-n},-q^{-m-n}}
{\sqrt{qac},-q^{-\ell}\!\sqrt{qac},q^{-m-2n}}
=\ffnk{ccccc}{q}{q/a,-\sqrt{qc/a}}{qc/a,-\sqrt{q/ac}}_{\ell}
\nonumber\\\nonumber&&\xqdn\qqdn\:\:\times\:
\sum_{i=0}^{\ell}\sum_{j=0}^{m}(-1)^{i+j}q^{\frac{(1+2\ell)i}{2}-nj+\binom{j+1}{2}}\frac{1}{(ac)^{i/2}}
 \frac{1-q^{2i}c/a}{1-c/a}
\ffnk{ccccc}{q}{q^{-\ell},c,c/a}{q,q/a,q^{1+\ell}c/a}_i
 \nonumber\\\nonumber\\\label{equivalence-f}
  &&\xqdn\qqdn\:\:\times\:\ffnk{ccccc}{q}{q^{-m},q^{-m-n},-q^{-m-n},q^{-i}a,q^ic}
{q,\sqrt{qac},-\sqrt{qac},q^{-m-2n},q^{j-2m-2n-1}}_j
\ffnk{ccccc}{q^2}{q^{1-i+j}a,q^{1+i+j}c}{q,q^{1+2j}ac}_{m+n-j}.
 \end{eqnarray}

It should be pointed out that the corresponding hypergeometric
series identities of Theorem \ref{thm-b} and \eqref{equivalence-e}
are different. This similarly applies to Theorem \ref{thm-c} and
\eqref{equivalence-f}.

\section{Extensions of $q$-Dixon formulas}
\subsection{Extensions of Bailey's $q$-Dixon formula}
\begin{thm}\label{thm-d}
For two complex numbers $\{a,c\}$ and two nonnegative integers
$\{\ell, m\}$, there holds
 \begin{eqnarray*}
&&{_4\phi_3}\ffnk{ccccc}{q;q}{q^{-n},a,c,-q^{1+\ell+m-n}/ac}
{q^{1+\ell-n}/a,q^{1+m-n}/c,-ac}
=\ffnk{ccccc}{q}{q^{1+\ell-n}/a^2,-q/a}{q^{1+\ell}/a^2,-q^{1-n}/a}_{\ell}
\ffnk{ccccc}{q}{-a,q^{-m}c^2}{-ac,q^{-m}c}_{n}
\\&&\:\:\times\:
\sum_{i=0}^{\ell}\sum_{j=0}^{m}(-1)^{i+j}q^{ni+(n-i-j)(j-m)+\binom{j+1}{2}}a^ic^{n-i}
 \frac{1-q^{2i-2\ell-1}a^2}{1-q^{-2\ell-1}a^2}
 \\&&\:\:\times\:
\ffnk{ccccc}{q}{q^{-\ell},q^{-n},q^{-2\ell-1}a^2}{q,q^{n-2\ell}a^2,q^{-\ell}a^2}_i
 \ffnk{ccccc}{q}{q^{-m},q^{i-n},q^{1+2\ell-n-i}/a^2,q^{-m}c,-q^{-m}c}
{q,q^{1+\ell-n}/a,-q^{1+\ell-n}/a,q^{-m}c^2,q^{j-2m-1}c^2}_j
 \\&&\:\:\times\:
\ffnk{ccccc}{q^2}{q,q^{2+2\ell+2m-2n}/a^2c^2}{q^{2+2\ell-2n+2j}/a^2,q^{1-2m+2j}c^2}_{\frac{n-i-j}{2}}\chi(n-i-j).
 \end{eqnarray*}
\end{thm}

\begin{proof}
 Using \eqref{sear} with $a=c,\:b=a,\:c=-q^{1+u+v-n}/ac,\:d=q^{1+v-n}/c,\:e=q^{1+u-n}/a$,
  we get the relation
 \begin{eqnarray}
{_4\phi_3}\ffnk{ccccc}{q;q}{q^{-n},a,c,-q^{1+u+v-n}/ac}
{q^{1+u-n}/a,q^{1+v-n}/c,-ac}&&\xqdn=\:
\ffnk{ccccc}{q}{-a,q^{-v}c^2}{-ac,q^{-v}c}_n
 \nonumber\\\label{dixon-relation-a}&&\xqdn\times\:
 {_4\phi_3}\ffnk{ccccc}{q;q}{q^{-n},q^{1+u-n}/a^2,c,-q^{-v}c}
{q^{1+u-n}/a,-q^{1-n}/a,q^{-v}c^2}.
 \end{eqnarray}
Taking $u=\ell,\,v=m$ in \eqref{dixon-relation-a} and calculating
the $_4\phi_3$-series on the right hand side by
\eqref{equivalence-b}, we establish the theorem.
\end{proof}

\begin{exam}[$\ell=0,m=1$ in Theorem \ref{thm-d}]
\begin{eqnarray*}
&&\xqdn{_4\phi_3}\ffnk{ccccc}{q;q}{q^{-n},a,c,-q^{2-n}/ac}{q^{1-n}/a,q^{2-n}/c,-ac}\\
&&\xqdn\:=\:
\begin{cases}
\ffnk{ccccc}{q}{-a,c^2/q}{-ac,c/q}_{2s}
\ffnk{ccccc}{q^2}{q,q^{2s-2}a^2c^2}{c^2/q,q^{2s}a^2}_{s},&\qqdn
n=2s;
\\[4mm]
\frac{(q-1)c}{q-c^2}\ffnk{ccccc}{q}{-a,c^2/q}{-ac,c/q}_{1+2s}
\ffnk{ccccc}{q^2}{q^3,q^{2s}a^2c^2}{qc^2,q^{2+2s}a^2}_{s}\!,&\qqdn
n=1+2s.
\end{cases}
\end{eqnarray*}
\end{exam}

\begin{exam}[$\ell=1,m=1$ in Theorem \ref{thm-d}]
\begin{eqnarray*}
&&\xqdn{_4\phi_3}\ffnk{ccccc}{q;q}{q^{-n},a,c,-q^{3-n}/ac}{q^{2-n}/a,q^{2-n}/c,-ac}\\
&&\xqdn\:=\:
\begin{cases}
\frac{(q^2+ac)(q^2-q^{2s}a^2)}{(q^2-a^2)(q^2+q^{2s}ac)}
 \ffnk{ccccc}{q}{-a/q,c^2/q}{-ac,c/q}_{2s}
\ffnk{ccccc}{q^2}{q,q^{2s-2}a^2c^2}{c^2/q,q^{2s-2}a^2}_{s},&\qqdn
n=2s;
\\[4mm]
\frac{(a+c)(q-1)(q-q^{2s}ac)}{(q^2-a^2)(1-c^2/q^2)}\ffnk{ccccc}{q}{-a/q,c^2/q^2}{-ac,c/q}_{1+2s}
\ffnk{ccccc}{q^2}{q^3,q^{2s-2}a^2c^2}{c^2/q,q^{2s}a^2}_{s}\!,&\qqdn
n=1+2s.
\end{cases}
\end{eqnarray*}
\end{exam}

Setting $u=\ell,\,v=-m$ in \eqref{dixon-relation-a} and evaluating
the $_4\phi_3$-series on the right hand side by Theorem \ref{thm-a},
we obtain the following theorem.

\begin{thm}\label{thm-e}
For two complex numbers $\{a,c\}$ and two nonnegative integers
$\{\ell, m\}$, there holds
 \begin{eqnarray*}
&&{_4\phi_3}\ffnk{ccccc}{q;q}{q^{-n},a,c,-q^{1+\ell-m-n}/ac}
{q^{1+\ell-n}/a,q^{1-m-n}/c,-ac}
=\ffnk{ccccc}{q}{q^{1+\ell-n}/a^2,-q/a}{q^{1+\ell}/a^2,-q^{1-n}/a}_{\ell}
\ffnk{ccccc}{q}{-a,q^{m}c^2}{-ac,q^{m}c}_{n}
\\&&\:\:\times\:
\sum_{i=0}^{\ell}\sum_{j=0}^{m}(-1)^{i}q^{ni+(m+n-i)j-\binom{j}{2}}a^ic^{n-i}
 \frac{1-q^{2i-2\ell-1}a^2}{1-q^{-2\ell-1}a^2}\\
 &&\:\:\times\:
\ffnk{ccccc}{q}{q^{-\ell},q^{-n},q^{-2\ell-1}a^2}{q,q^{n-2\ell}a^2,q^{-\ell}a^2}_i
 \ffnk{ccccc}{q}{q^{-m},q^{i-n},q^{1+2\ell-n-i}/a^2,c,-c}
{q,q^{1+\ell-n}/a,-q^{1+\ell-n}/a,q^{m}c^2,q^{j-1}c^2}_j
 \\&&\:\:\times\:
\ffnk{ccccc}{q^2}{q,q^{2+2\ell-2n}/a^2c^2}{q^{2+2\ell-2n+2j}/a^2,q^{1+2j}c^2}_{\frac{n-i-j}{2}}\chi(n-i-j).
 \end{eqnarray*}
\end{thm}

\begin{exam}[$\ell=0,m=1$ in Theorem \ref{thm-e}]
\begin{eqnarray*}
&&\xqdn{_4\phi_3}\ffnk{ccccc}{q;q}{q^{-n},a,c,-q^{-n}/ac}{q^{1-n}/a,q^{-n}/c,-ac}\\
&&\xqdn\:=\:
\begin{cases}
\ffnk{ccccc}{q}{-a,qc^2}{-ac,qc}_{2s}
\ffnk{ccccc}{q^2}{q,q^{2s}a^2c^2}{qc^2,q^{2s}a^2}_{s},&\qqdn n=2s;
\\[4mm]
\frac{(1-q)c}{1-qc^2}\ffnk{ccccc}{q}{-a,qc^2}{-ac,qc}_{1+2s}
\ffnk{ccccc}{q^2}{q^3,q^{2+2s}a^2c^2}{q^3c^2,q^{2+2s}a^2}_{s}\!,&\qqdn
n=1+2s.
\end{cases}
\end{eqnarray*}
\end{exam}

\begin{exam}[$\ell=1,m=1$ in Theorem \ref{thm-e}]
\begin{eqnarray*}
&&\xqdn{_4\phi_3}\ffnk{ccccc}{q;q}{q^{-n},a,c,-q^{1-n}/ac}{q^{2-n}/a,q^{-n}/c,-ac}\\
&&\xqdn\:=\:
\begin{cases}
\frac{(q^2-a^2c^2)(1-q^{2s-2}a^2)}{(q^2-a^2)(1-q^{2s-2}a^2c^2)}\ffnk{ccccc}{q}{-a/q,qc^2}{-ac/q,qc}_{2s}
\ffnk{ccccc}{q^2}{q,q^{2s-2}a^2c^2}{qc^2,q^{2s-2}a^2}_{s},&\qqdn
n=2s;
\\[4mm]
\frac{(1-q)(qc-a)}{(q-a)(1-qc)}
\ffnk{ccccc}{q}{-a,qc^2}{-ac,q^2c}_{2s}
\ffnk{ccccc}{q^2}{q^3,q^{2s}a^2c^2}{qc^2,q^{2s}a^2}_{s}\!,&\qqdn
n=1+2s.
\end{cases}
\end{eqnarray*}
\end{exam}

Taking $u=-\ell,\,v=-m$ in \eqref{dixon-relation-a} and calculating
the $_4\phi_3$-series on the right hand side by
\eqref{equivalence-a}, we get the following theorem.

\begin{thm}\label{thm-f}
For two complex numbers $\{a,c\}$ and two nonnegative integers
$\{\ell, m\}$, there holds
\begin{eqnarray*}
&&\xxqdn{_4\phi_3}\ffnk{ccccc}{q;q}{q^{-n},a,c,-q^{1-\ell-m-n}/ac}
{q^{1-\ell-n}/a,q^{1-m-n}/c,-ac}
=\ffnk{ccccc}{q}{a,q^{n}a^2}{a^2,q^{n}a}_{\ell}
\ffnk{ccccc}{q}{-a,q^{m}c^2}{-ac,q^{m}c}_{n}
\\&&\xxqdn\:\:\times\:
\sum_{i=0}^{\ell}\sum_{j=0}^{m}q^{(\ell+n)i+(m+n-i)j-\binom{j}{2}}a^ic^{n-i}
 \frac{1-q^{2i-1}a^2}{1-q^{-1}a^2}
 \\&&\xxqdn\:\:\times\:
\ffnk{ccccc}{q}{q^{-\ell},q^{-n},q^{-1}a^2}{q,q^{n}a^2,q^{\ell}a^2}_i
 \ffnk{ccccc}{q}{q^{-m},q^{i-n},q^{1-n-i}/a^2,c,-c}
{q,q^{1-n}/a,-q^{1-n}/a,q^{m}c^2,q^{j-1}c^2}_j
 \\&&\xxqdn\:\:\times\:
\ffnk{ccccc}{q^2}{q,q^{2-2n}/a^2c^2}{q^{2-2n+2j}/a^2,q^{1+2j}c^2}_{\frac{n-i-j}{2}}\chi(n-i-j).
\end{eqnarray*}
\end{thm}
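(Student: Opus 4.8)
The plan is to reuse the three-step scheme already employed for Theorems \ref{thm-d} and \ref{thm-e}, now specializing the two integer parameters to the doubly-negative case. The only input from Sear's transformation is the relation \eqref{dixon-relation-a}, which holds for arbitrary integers $u,v$; I would first set $u=-\ell$ and $v=-m$ there. Its left-hand side becomes exactly the series in the statement, the scalar prefactor becomes $\ffnk{ccccc}{q}{-a,q^{m}c^2}{-ac,q^{m}c}_{n}$, and the companion ${}_4\phi_3$ on the right collapses to
\[{_4\phi_3}\ffnk{ccccc}{q;q}{q^{-n},q^{1-\ell-n}/a^2,c,-q^{m}c}{q^{1-\ell-n}/a,-q^{1-n}/a,q^{m}c^2}.\]

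The second step is to evaluate this last series by \eqref{equivalence-a}. The correct specialization is $a\to q^{-2n}/a^2$ and $c\to c^2$ in \eqref{equivalence-a}, leaving both integers $\ell$ and $m$ unchanged, and the subtle point is that one must take the branch $\sqrt{a}=-q^{-n}/a$ (together with $\sqrt{c}=c$). With this branch the two lower parameters $q\sqrt{a}$ and $-q^{1-\ell}\sqrt{a}$ of \eqref{equivalence-a} become $-q^{1-n}/a$ and $q^{1-\ell-n}/a$, matching the two lowers above (their order is immaterial in a ${}_4\phi_3$), while $q^{1-\ell+n}a\to q^{1-\ell-n}/a^2$, $\sqrt{c}\to c$, $-q^{m}\sqrt{c}\to -q^{m}c$ and $q^{m}c\to q^{m}c^2$ account for the remaining five parameters, so all seven match.

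The third step is to push this substitution through the closed form of \eqref{equivalence-a} and recognise the double sum of Theorem \ref{thm-f}. The $\ell$-prefactor $\ffnk{ccccc}{q}{q^{-n}/a,-q^{-n}/\sqrt{a}}{q^{-2n}/a,-1/\sqrt{a}}_{\ell}$ turns into $\ffnk{ccccc}{q}{a,q^{n}a^2}{a^2,q^{n}a}_{\ell}$, which combines with the prefactor from \eqref{dixon-relation-a} to give the stated product, and each inner $q$- and $q^2$-shifted factorial maps to its counterpart under $a\to q^{-2n}/a^2$, $c\to c^2$. The step that will demand the most care is the bookkeeping of the scalar weight of the summand: under $\sqrt{a}=-q^{-n}/a$ one has $a^{i/2}=(-1)^{i}q^{-ni}/a^{i}$, so $c^{(n-i)/2}/a^{i/2}\mapsto(-1)^{i}q^{ni}a^{i}c^{n-i}$ and the explicit $(-1)^{i}$ of \eqref{equivalence-a} cancels this sign; meanwhile $\tfrac{1-q^{1+2n-2i}a}{1-q^{1+2n}a}$ becomes $\tfrac{1-q^{2i-1}a^2}{1-q^{-1}a^2}$ only after extracting a factor $q^{-2i}$. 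Collecting the three $i$-dependent powers, $q^{(\ell+2)i}\cdot q^{ni}\cdot q^{-2i}=q^{(\ell+n)i}$, then reproduces the exponent $q^{(\ell+n)i+(m+n-i)j-\binom{j}{2}}$. Verifying that every sign, every half-integer power of $a$ and $c$, and every shifted factorial collapses exactly to the displayed expression is the only real labour; no identity beyond \eqref{sear} and \eqref{equivalence-a} is needed.
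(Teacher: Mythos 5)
Your proposal is correct and is precisely the paper's own route: the authors obtain Theorem \ref{thm-f} by setting $u=-\ell$, $v=-m$ in \eqref{dixon-relation-a} and then evaluating the resulting ${}_4\phi_3$ by \eqref{equivalence-a}, which is exactly your two steps. The substitution you identify ($a\to q^{-2n}/a^2$ with the branch $\sqrt{a}=-q^{-n}/a$, and $c\to c^2$ with $\sqrt{c}=c$) and the bookkeeping of the signs and the factor $q^{-2i}$ extracted from $\tfrac{1-q^{1-2i}/a^2}{1-q/a^2}$ all check out against the stated summand.
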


\begin{exam}[$\ell=1,m=1$ in Theorem \ref{thm-f}]
\begin{eqnarray*}
&&\xxqdn\xqdn\qqdn{_4\phi_3}\ffnk{ccccc}{q;q}{q^{-n},a,c,-q^{-1-n}/ac}{q^{-n}/a,q^{-n}/c,-ac}\\
&&\xxqdn\xqdn\qqdn\:=\:
\begin{cases}
\ffnk{ccccc}{q}{-qa,qc^2}{-qac,qc}_{2s}
\ffnk{ccccc}{q^2}{q,q^{2s+2}a^2c^2}{qc^2,q^{2s+2}a^2}_{s},&\qqdn
n=2s;
\\[4mm]
\frac{a+c}{1+ac} \ffnk{ccccc}{q}{-qa,qc^2}{-qac,qc}_{1+2s}
\ffnk{ccccc}{q^2}{q,q^{2s+2}a^2c^2}{qc^2,q^{2s+2}a^2}_{1+s}\!,&\qqdn
n=1+2s.
\end{cases}
\end{eqnarray*}
\end{exam}

Letting $u=-\ell,\,v=m$ in \eqref{dixon-relation-a} and evaluating
the $_4\phi_3$-series on the right hand side by
\eqref{equivalence-c}, we can derive summation formula for the
following series:
 \begin{eqnarray*}
{_4\phi_3}\ffnk{ccccc}{q;q}{q^{-n},a,c,-q^{1-\ell+m-n}/ac}
{q^{1-\ell-n}/a,q^{1+m-n}/c,-ac},
 \end{eqnarray*}
which is equivalent to Theorem \ref{thm-e}. The corresponding
concrete result has been omitted.

\subsection{Extensions of another $q$-Dixon formula}
\begin{thm}\label{thm-g}
For two complex numbers $\{a,c\}$ and two nonnegative integers
$\{\ell, m\}$, there holds
 \begin{eqnarray*}
&&{_4\phi_3}\ffnk{ccccc}{q;q}{a,c,q^{-n},-q^{1+\ell+m+n}a/c}
{q^{1+\ell}a/c,q^{1+m+n}a,-q^{-n}c}
=\ffnk{ccccc}{q}{a,-q^{-\ell}c}{q^{-\ell-1}c^2,-qa/c}_{\ell}
\ffnk{ccccc}{q}{q^{1+m+n},-qa/c}{q^{1+m+n}a,-q/c}_{n}
\\&&\:\:\times\:
\sum_{i=0}^{\ell}\sum_{j=0}^{m}(-1)^{i+j}q^{\binom{j+1}{2}-nj}\Big(\frac{c}{a}\Big)^i
 \frac{1-q^{1+2i}/c^2}{1-q/c^2}\ffnk{ccccc}{q}{q^{-\ell},q^{1+\ell}a/c^2,q/c^2}{q,q^{1-\ell}/a,q^{2+\ell}/c^2}_i
 \\&&\:\:\times\:
 \ffnk{ccccc}{q}{q^{-m},q^{-m-n},-q^{-m-n},q^{\ell-i}a,q^{1+\ell+i}a/c^2}
{q,q^{1+\ell}a/c,-q^{1+\ell}a/c,q^{-m-2n},q^{j-2m-2n-1}}_j
\\&&\:\:\times\:
\ffnk{ccccc}{q^2}{q^{1+\ell-i+j}a,q^{2+\ell+i+j}a/c^2}{q,q^{2+2\ell+2j}a^2/c^2}_{m+n-j}.
 \end{eqnarray*}
\end{thm}

\begin{proof}
Utilizing \eqref{sear} with
$a=a,\:b=c,\:c=-q^{1+u+v+n}a/c,\:d=q^{1+v+n}a,\:e=q^{1+u}a/c$, we
obtain the relation
 \begin{eqnarray}
{_4\phi_3}\ffnk{ccccc}{q;q}{a,c,q^{-n},-q^{1+u+v+n}a/c}
{q^{1+u}a/c,q^{1+v+n}a,-q^{-n}c}&&\xqdn=\:
\ffnk{ccccc}{q}{q^{1+v+n},-qa/c}{q^{1+v+n}a,-q/c}_n
 \nonumber\\\label{dixon-relation-b}&&\xqdn\times\:
 {_4\phi_3}\ffnk{ccccc}{q;q}{a,q^{1+u}a/c^2,q^{-n},-q^{-v-n}}
{q^{1+u}a/c,-qa/c,q^{-v-2n}}.
\end{eqnarray}
Setting $u=\ell,\,v=m$ in \eqref{dixon-relation-b} and evaluating
the $_4\phi_3$-series on the right hand side by Theorem \ref{thm-c},
we establish the theorem.
\end{proof}

\begin{exam}[$\ell=0,m=1$ in Theorem \ref{thm-g}]
\begin{eqnarray*}
&&\xxqdn{_4\phi_3}\ffnk{ccccc}{q;q}{a,c,q^{-n},-q^{2+n}a/c}{qa/c,q^{2+n}a,-q^{-n}c}\\
&&\xxqdn\:\:=\:\ffnk{ccccc}{q}{q^{2+n},-qa/c}{q^{2+n}a,-q/c}_{n}\ffnk{ccccc}{q^2}{qa,q^2a/c^2}{q,q^2a^2/c^2}_{1+n}\\
&&\xxqdn\:\:-\:\:q^{1+n}\ffnk{ccccc}{q}{q^{2+n},-qa/c}{q^{2+n}a,-q/c}_{n}\ffnk{ccccc}{q^2}{a,qa/c^2}{q,q^2a^2/c^2}_{1+n}.
\end{eqnarray*}
\end{exam}

\begin{exam}[$\ell=1,m=0$ in Theorem \ref{thm-g}]
 \begin{eqnarray*}
&&\xxqdn\xqdn\qqdn{_4\phi_3}\ffnk{ccccc}{q;q}{a,c,q^{-n},-q^{2+n}a/c}{q^2a/c,q^{1+n}a,-q^{-n}c}\\
&&\xxqdn\xqdn\qqdn\:\:=\:\frac{qc(1-a)}{(qa+c)(q-c)}\ffnk{ccccc}{q}{q^{1+n},-qa/c}{q^{1+n}a,-q/c}_{n}
\ffnk{ccccc}{q^2}{q^2a,q^3a/c^2}{q,q^4a^2/c^2}_{n}\\
&&\xxqdn\xqdn\qqdn\:\:+\:\:\frac{q^2a-c^2}{(qa+c)(q-c)}\ffnk{ccccc}{q}{q^{1+n},-qa/c}{q^{1+n}a,-q/c}_{n}
\ffnk{ccccc}{q^2}{qa,q^4a/c^2}{q,q^4a^2/c^2}_{n}.
 \end{eqnarray*}
\end{exam}

\begin{exam}[$\ell=1,m=1$ in Theorem \ref{thm-g}]
\begin{eqnarray*}
&&\xqdn{_4\phi_3}\ffnk{ccccc}{q;q}{a,c,q^{-n},-q^{3+n}a/c}{q^2a/c,q^{2+n}a,-q^{-n}c}\\
&&\xqdn\:\:=\:\frac{q(1+q^{n}c)(c-q^{2+n}a)}{(qa+c)(q-c)}\ffnk{ccccc}{q}{q^{2+n},-qa/c}{q^{2+n}a,-q/c}_{n}
\ffnk{ccccc}{q^2}{a,q^3a/c^2}{q,q^4a^2/c^2}_{1+n}\\
&&\xqdn\:\:-\:\:\frac{(c+q^{2+n})(c-q^{2+n}a)}{(qa+c)(q-c)}\ffnk{ccccc}{q}{q^{2+n},-qa/c}{q^{2+n}a,-q/c}_{n}
\ffnk{ccccc}{q^2}{qa,q^2a/c^2}{q,q^4a^2/c^2}_{1+n}.
  \end{eqnarray*}
\end{exam}

Taking $u=\ell,\,v=-m$ in \eqref{dixon-relation-b} and calculating
the $_4\phi_3$-series on the right hand side by Theorem \ref{thm-b},
we get the following theorem.

\begin{thm}\label{thm-h}
For two complex numbers $\{a,c\}$ and two nonnegative integers
$\{\ell, m\}$ with $m\leq n$, there holds
 \begin{eqnarray*}
&&\xqdn{_4\phi_3}\ffnk{ccccc}{q;q}{a,c,q^{-n},-q^{1+\ell-m+n}a/c}
{q^{1+\ell}a/c,q^{1-m+n}a,-q^{-n}c}
=\ffnk{ccccc}{q}{a,-q^{-\ell}c}{q^{-\ell-1}c^2,-qa/c}_{\ell}
\ffnk{ccccc}{q}{q^{1-m+n},-qa/c}{q^{1-m+n}a,-q/c}_{n}
\\&&\xqdn\:\:\times\:
\sum_{i=0}^{\ell}\sum_{j=0}^{m}(-1)^{i}q^{(m-n)j+\binom{j+1}{2}}\Big(\frac{c}{a}\Big)^i
 \frac{1-q^{1+2i}/c^2}{1-q/c^2}\ffnk{ccccc}{q}{q^{-\ell},q^{1+\ell}a/c^2,q/c^2}{q,q^{1-\ell}/a,q^{2+\ell}/c^2}_i
 \\&&\xqdn\:\:\times\:
 \ffnk{ccccc}{q}{q^{-m},q^{-n},-q^{-n},q^{\ell-i}a,q^{1+\ell+i}a/c^2}
{q,q^{1+\ell}a/c,-q^{1+\ell}a/c,q^{m-2n},q^{j-2n-1}}_j
\ffnk{ccccc}{q^2}{q^{1+\ell-i+j}a,q^{2+\ell+i+j}a/c^2}{q,q^{2+2\ell+2j}a^2/c^2}_{n-j}.
  \end{eqnarray*}
\end{thm}

\begin{exam}[$\ell=0,m=1$ in Theorem \ref{thm-h}: $n\geq1$]
 \begin{eqnarray*}
&&\xxqdn\xxqdn\qqdn{_4\phi_3}\ffnk{ccccc}{q;q}{a,c,q^{-n},-q^{n}a/c}{qa/c,q^{n}a,-q^{-n}c}\\
&&\xxqdn\xxqdn\qqdn\:\:=\:\ffnk{ccccc}{q}{q^{n},-qa/c}{q^{n}a,-q/c}_{n}\ffnk{ccccc}{q^2}{qa,q^2a/c^2}{q,q^2a^2/c^2}_{n}\\
&&\xxqdn\xxqdn\qqdn\:\:+\:\:q^{n}\ffnk{ccccc}{q}{q^{n},-qa/c}{q^{n}a,-q/c}_{n}\ffnk{ccccc}{q^2}{a,qa/c^2}{q,q^2a^2/c^2}_{n}.
  \end{eqnarray*}
\end{exam}

\begin{exam}[$\ell=1,m=1$ in Theorem \ref{thm-h}: $n\geq1$]
 \begin{eqnarray*}
&&\xxqdn{_4\phi_3}\ffnk{ccccc}{q;q}{a,c,q^{-n},-q^{1+n}a/c}{q^2a/c,q^{n}a,-q^{-n}c}\\
&&\xxqdn\:\:=\:\frac{q-q^nc}{q-c}\ffnk{ccccc}{q}{q^{n},-q^2a/c}{q^{n}a,-q/c}_{n}\ffnk{ccccc}{q^2}{a,q^3a/c^2}{q,q^4a^2/c^2}_{n}\\
&&\xxqdn\:\:+\:\:\frac{q^{1+n}-c}{q-c}\ffnk{ccccc}{q}{q^{n},-q^2a/c}{q^{n}a,-q/c}_{n}\ffnk{ccccc}{q^2}{qa,q^2a/c^2}{q,q^4a^2/c^2}_{n}.
  \end{eqnarray*}
\end{exam}

Setting $u=-\ell,\,v=m$ in \eqref{dixon-relation-b} and evaluating
the $_4\phi_3$-series on the right hand side by
\eqref{equivalence-f}, we obtain the following theorem.

\begin{thm}\label{thm-i}
For two complex numbers $\{a,c\}$ and two nonnegative integers
$\{\ell, m\}$, there holds
 \begin{eqnarray*}
&&\xqdn\qdn{_4\phi_3}\ffnk{ccccc}{q;q}{a,c,q^{-n},-q^{1-\ell+m+n}a/c}
{q^{1-\ell}a/c,q^{1+m+n}a,-q^{-n}c}
=\ffnk{ccccc}{q}{a,c}{q^{\ell-1}c^2,q^{1-\ell}a/c}_{\ell}
\ffnk{ccccc}{q}{q^{1+m+n},-qa/c}{q^{1+m+n}a,-q/c}_{n}
\\&&\xqdn\qdn\:\:\times\:
\sum_{i=0}^{\ell}\sum_{j=0}^{m}(-1)^{j}q^{\ell i-nj+\binom{j+1}{2}}
\Big(\frac{c}{a}\Big)^i
 \frac{1-q^{1-2\ell+2i}/c^2}{1-q^{1-2\ell}/c^2}\ffnk{ccccc}{q}{q^{-\ell},q^{1-\ell}a/c^2,q^{1-2\ell}/c^2}{q,q^{1-\ell}/a,q^{2-\ell}/c^2}_i
 \\&&\xqdn\qdn\:\:\times\:
 \ffnk{ccccc}{q}{q^{-m},q^{-m-n},-q^{-m-n},q^{\ell-i}a,q^{1-\ell+i}a/c^2}
{q,qa/c,-qa/c,q^{-m-2n},q^{j-2m-2n-1}}_j
\ffnk{ccccc}{q^2}{q^{1+\ell-i+j}a,q^{2-\ell+i+j}a/c^2}{q,q^{2+2j}a^2/c^2}_{m+n-j}.
\end{eqnarray*}
\end{thm}

\begin{exam}[$\ell=1,m=0$ in Theorem \ref{thm-i}]
 \begin{eqnarray*}
&&\xxqdn\xqdn\qqdn{_4\phi_3}\ffnk{ccccc}{q;q}{a,c,q^{-n},-q^{n}a/c}{a/c,q^{1+n}a,-q^{-n}c}\\
&&\xxqdn\xqdn\qqdn\:\:=\:\frac{(1-a)c}{(1+c)(c-a)}\ffnk{ccccc}{q}{q^{1+n},-qa/c}{q^{1+n}a,-q/c}_{n}
\ffnk{ccccc}{q^2}{q^2a,qa/c^2}{q,q^2a^2/c^2}_{n}\\
&&\xxqdn\xqdn\qqdn\:\:+\:\:\frac{c^2-a}{(1+c)(c-a)}\ffnk{ccccc}{q}{q^{1+n},-qa/c}{q^{1+n}a,-q/c}_{n}
\ffnk{ccccc}{q^2}{qa,q^2a/c^2}{q,q^2a^2/c^2}_{n}.
\end{eqnarray*}
\end{exam}

\begin{exam}[$\ell=1,m=1$ in Theorem \ref{thm-i}]
 \begin{eqnarray*}
&&\xqdn{_4\phi_3}\ffnk{ccccc}{q;q}{a,c,q^{-n},-q^{1+n}a/c}{a/c,q^{2+n}a,-q^{-n}c}\\
&&\xqdn\:\:=\:\frac{(qa+c)(1-q^{1+n}c)}{(1+c)(c-a)}\ffnk{ccccc}{q}{q^{2+n},-q^2a/c}{q^{2+n}a,-q/c}_{n}
\ffnk{ccccc}{q^2}{a,qa/c^2}{q,q^2a^2/c^2}_{1+n}\\
&&\xqdn\:\:+\:\:\frac{(qa+c)(c-q^{1+n})}{(1+c)(c-a)}\ffnk{ccccc}{q}{q^{2+n},-q^2a/c}{q^{2+n}a,-q/c}_{n}
\ffnk{ccccc}{q^2}{qa,a/c^2}{q,q^2a^2/c^2}_{1+n}.
\end{eqnarray*}
\end{exam}

Taking $u=-\ell,\,v=-m$ in \eqref{dixon-relation-b} and calculating
the $_4\phi_3$-series on the right hand side by
\eqref{equivalence-e}, we get the following theorem.

\begin{thm}\label{thm-j}
For two complex numbers $\{a,c\}$ and two nonnegative integers
$\{\ell, m\}$ with $m\leq n$, there holds
\begin{eqnarray*}
&&\xqdn\qdn{_4\phi_3}\ffnk{ccccc}{q;q}{a,c,q^{-n},-q^{1-\ell-m+n}a/c}
{q^{1-\ell}a/c,q^{1-m+n}a,-q^{-n}c}
=\ffnk{ccccc}{q}{a,c}{q^{\ell-1}c^2,q^{1-\ell}a/c}_{\ell}
\ffnk{ccccc}{q}{q^{1-m+n},-qa/c}{q^{1-m+n}a,-q/c}_{n}
\\&&\xqdn\qdn\:\:\times\:
\sum_{i=0}^{\ell}\sum_{j=0}^{m}q^{\ell i+(m-n)j+\binom{j+1}{2}}
\Big(\frac{c}{a}\Big)^i
 \frac{1-q^{1-2\ell+2i}/c^2}{1-q^{1-2\ell}/c^2}\ffnk{ccccc}{q}{q^{-\ell},q^{1-\ell}a/c^2,q^{1-2\ell}/c^2}{q,q^{1-\ell}/a,q^{2-\ell}/c^2}_i
 \\&&\xqdn\qdn\:\:\times\:
 \ffnk{ccccc}{q}{q^{-m},q^{-n},-q^{-n},q^{\ell-i}a,q^{1-\ell+i}a/c^2}
{q,qa/c,-qa/c,q^{m-2n},q^{j-2n-1}}_j\ffnk{ccccc}{q^2}{q^{1+\ell-i+j}a,q^{2-\ell+i+j}a/c^2}{q,q^{2+2j}a^2/c^2}_{n-j}.
 \end{eqnarray*}
\end{thm}

\begin{exam}[$\ell=1,m=1$ in Theorem \ref{thm-j}: $n\geq1$]
 \begin{eqnarray*}
&&\xqdn{_4\phi_3}\ffnk{ccccc}{q;q}{a,c,q^{-n},-q^{n-1}a/c}{a/c,q^{n}a,-q^{-n}c}\\
&&\xqdn\:\:=\:\ffnk{ccccc}{q}{q^{n},-a/c}{q^{n}a,-1/c}_{n}
\ffnk{ccccc}{q^2}{qa,a/c^2}{q,a^2/c^2}_{n}\\
&&\xqdn\:\:+\:\:\frac{1+q^nc}{1+c}\ffnk{ccccc}{q}{q^{n},-a/c}{q^{n}a,-q/c}_{n}
\ffnk{ccccc}{q^2}{a,qa/c^2}{q,a^2/c^2}_{n}.
 \end{eqnarray*}
\end{exam}

\section{Extensions of $q$-Whipple formulas}
\subsection{Extensions of Andrews' $q$-Whipple formula}
\begin{thm}\label{thm-k}
For two complex numbers $\{a,c\}$ and two nonnegative integers
$\{\ell, m\}$, there holds
\begin{eqnarray*}
&&\qdn\xqdn{_4\phi_3}\ffnk{ccccc}{q;q}{q^{-n},q^{1+\ell+n},\sqrt{qac},-q^{m}\!\sqrt{qac}}
{-q,q^{1+\ell+m}a,qc}
=\ffnk{ccccc}{q}{q^{-n}\!\sqrt{qa/c},-q^{m-n}\!\sqrt{qa/c},q^{1+\ell+n}}{q^{1+\ell+m}a,q^{-n}/c,-q}_{n}
\\&&\qdn\xqdn\:\:\times\:
\ffnk{ccccc}{q}{q^{m-n}a,\sqrt{qac}}{q^{m}ac,q^{-n}\!\sqrt{qa/c}}_{m}
\sum_{i=0}^{\ell}\sum_{j=0}^{m}(-1)^iq^{(n+\frac{5}{2})j-ni+\binom{i+1}{2}}\Big(\frac{c}{a}\Big)^{\frac{j}{2}}
\frac{1-q^{2m-2j}ac}{1-q^{2m}ac}
 \\&&\qdn\xqdn\:\:\times\:
\ffnk{ccccc}{q}{q^{-\ell},q^{-\ell-n},-q^{-\ell-n},q^{2m-n-j}a,q^{j-n}/c}
{q,q^{m-n}\!\!\sqrt{qa/c},-q^{m-n}\!\!\sqrt{qa/c},q^{-\ell-2n},q^{i-2\ell-2n-1}}_i
\ffnk{ccccc}{q}{q^{-m},q^{-n}/c,q^{-2m}/ac}{q,q^{1-2m+n}/a,q^{1-m}/ac}_j
\\&&\qdn\xqdn\:\:\times\:
\ffnk{ccccc}{q^2}{q^{1+2m-n+i-j}a,q^{1-n+i+j}/c}{q,q^{1+2m-2n+2i}a/c}_{\ell+n-i}.
\end{eqnarray*}
\end{thm}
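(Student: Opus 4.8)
The plan is to reproduce, one layer higher, the mechanism that generated the extended $q$-Dixon formulas in Section 3. There each Dixon evaluation was obtained by first using Sear's transformation \eqref{sear} to write a Dixon-type $_4\phi_3$ as an explicit prefactor times a Watson-type $_4\phi_3$ (relations \eqref{dixon-relation-a} and \eqref{dixon-relation-b}), and then substituting the relevant extended Watson formula. For the present Andrews-Whipple extension I would run the analogous step with the roles shifted up by one: apply \eqref{sear} to the $_4\phi_3$ on the left to express it as a prefactor times an extended series of the ``another $q$-Dixon'' type (the family of Theorem \ref{thm-g}), and then substitute Theorem \ref{thm-g}, whose evaluation in turn rests, through \eqref{dixon-relation-b}, on the Jain-Watson formula \eqref{jain-watson}. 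Before anything else I would confirm that \eqref{sear} applies: the ratio of the product of the four numerator parameters $q^{-n},q^{1+\ell+n},\sqrt{qac},-q^{m}\sqrt{qac}$ to the product of the three denominator parameters $-q,q^{1+\ell+m}a,qc$ equals $q^{-1}$, so the series is balanced with $q^{-n}$ as the terminating entry.

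The delicate point is the parameter assignment inside \eqref{sear}. One must send the three non-terminating numerator parameters to Sear's $a,b,c$ and pick two of $-q,q^{1+\ell+m}a,qc$ as $d,e$, the third being pinned to the balancing slot $q^{1-n}abc/de$. Here the choice matters: keeping $q^{1+\ell+n}$ as the preserved parameter merely carries one Andrews-Whipple series to another (the $q^{n}$ inside it cancels the $q^{1-n}$ in every transformed denominator entry), so that route is a dead end. Instead I would keep $\sqrt{qac}$ (or $-q^{m}\sqrt{qac}$) fixed, which forces a genuinely $n$-dependent denominator entry of the form $-q^{-n}$ times an $n$-free expression in the transformed series, and thereby produces exactly the another-Dixon shape evaluated by Theorem \ref{thm-g}. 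The Sear prefactor is then the length-$n$ factorial $\ffnk{ccccc}{q}{d/a,de/bc}{d,de/abc}_n$ in the current parameters.

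With the transformed series identified, I would substitute Theorem \ref{thm-g}. Since that theorem already delivers a double sum, the substitution yields the double sum of the statement after the interchange $\ell\leftrightarrow m,\ i\leftrightarrow j$: the index $i$ (running to $\ell$) lands on the $\pm q^{-\ell-n}$-paired block, the clean $q^2$-factorial acquires length $\ell+n-i$, while the very-well-poised factor $\frac{1-q^{2m-2j}ac}{1-q^{2m}ac}$ and the half-integer power $(c/a)^{j/2}$ descend from the $_6\phi_5$ origin of the $m$-block in \eqref{terminating-65}. The two separate length-$n$ prefactors — the one coming from \eqref{sear} and the one internal to Theorem \ref{thm-g} — would then be merged into the single displayed three-over-three $q$-shifted factorial of subscript $n$, with the remaining pieces collected into the subscript-$m$ factorial.

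The main obstacle is precisely this bookkeeping, on two fronts. First, locating the unique Sear assignment (which lower parameter balances, and the order of $d$ and $e$) for which the transformed $_4\phi_3$ matches Theorem \ref{thm-g} verbatim — paired entries, balancing entry and terminating entry all simultaneously correct — since several neighbouring assignments only regenerate a Whipple series and lead nowhere. Second, after substitution one must consolidate the overlapping $q$- and $q^2$-shifted factorials and reindex the double sum so that the exponent $(n+\tfrac52)j-ni+\binom{i+1}{2}$ and the two long factorials emerge exactly as written; the sibling evaluations \eqref{equivalence-e} and \eqref{equivalence-f} offer some freedom in how the final product is packaged. As a concluding consistency check I would verify that setting $\ell=m=0$ collapses the double sum to its single $i=j=0$ term and recovers the Andrews-Whipple formula \eqref{andrews-whipple}.
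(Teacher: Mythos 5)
Your overall strategy---use Sear's transformation to turn the Whipple-type series into a Dixon-type series and then quote an already-proven Dixon extension---is not the route the paper takes. The paper iterates \eqref{sear} with itself to obtain a second transformation (the one displayed at the start of its proof), which carries the left-hand side of Theorem \ref{thm-k} in a single stroke to a Jain--Watson-type series, namely the series of Theorem \ref{thm-c} with $(a,c,\ell,m)$ replaced by $(q^{-n}a,\,q^{-n}/c,\,m,\,\ell)$; this is relation \eqref{whipple-relation-a}, and it explains why the displayed double sum of Theorem \ref{thm-k} reproduces exactly the inner factors of Theorem \ref{thm-c} with the roles of $i$ and $j$ interchanged. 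Your factored version (one Sear to a Dixon-type series, then a Dixon theorem that was itself obtained from the Watson side by one further Sear) composes to the same mechanism in principle, so the idea is reasonable.

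However, the concrete reduction you propose fails as stated. Carrying out \eqref{sear} with $\sqrt{qac}$ as the preserved parameter (say Sear's $b=q^{1+\ell+n}$, $c=-q^{m}\sqrt{qac}$, $d=qc$, $e=q^{1+\ell+m}a$, so that the balancing slot is $-q$) produces the series with numerator parameters $q^{-n}$, $\sqrt{qac}$, $q^{m-n}a$, $-q^{\ell}\sqrt{qa/c}$ and denominator parameters $q^{1+\ell+m}a$, $-q^{-n}\sqrt{qac}$, $q^{-n}\sqrt{qa/c}$. Matching this against the Dixon template with numerator $A,C,q^{-n},-q^{1+u+v+n}A/C$ and denominator $q^{1+u}A/C,\,q^{1+v+n}A,\,-q^{-n}C$ forces $A=q^{m-n}a$, $C=\sqrt{qac}$, $v=\ell$ and $u=-m$: one of the two integer shifts is necessarily nonpositive, and the other admissible choices of $d,e$ give the same outcome. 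So the transformed series belongs to the family of Theorem \ref{thm-i} (equivalently, it is evaluated through \eqref{equivalence-f}), not to that of Theorem \ref{thm-g}, which requires both shifts nonnegative; the step ``substitute Theorem \ref{thm-g}'' cannot be executed. Even after replacing \ref{thm-g} by \ref{thm-i}, a further gap remains: Theorem \ref{thm-i} rests on \eqref{equivalence-f}, whose double sum is, as the paper itself emphasizes, an essentially different-looking expression from that of Theorem \ref{thm-c}. You would therefore obtain a correct identity with the same left-hand side but a right-hand side not literally equal in form to the one displayed in Theorem \ref{thm-k}, and identifying the two would require a separate, nontrivial verification.
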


\begin{proof}
The iteration of \eqref{sear} produces the transformation formula:
 \begin{eqnarray*}
\quad{_4\phi_3}\ffnk{ccccc}{q;q}{q^{-n},a,b,c}{d,e,q^{1-n}abc/de}
=\ffnk{ccccc}{q}{c,de/ac,de/bc}{d,e,de/abc}_n
{_4\phi_3}\ffnk{ccccc}{q;q}{q^{-n},d/c,e/c,de/abc}{de/ac,de/bc,q^{1-n}/c}.
 \end{eqnarray*}
 Using the last equation with $a=\sqrt{qac},\:b=-q^{v}\!\sqrt{qac},\:c=q^{1+n+u},\:d=q^{1+u+v}a,\:e=-q$, we obtain the relation
\begin{eqnarray}
&&{_4\phi_3}\ffnk{ccccc}{q;q}{q^{-n},q^{1+n+u},\sqrt{qac},-q^{v}\!\sqrt{qac}}
{-q,q^{1+u+v}a,qc}
 \nonumber\\\nonumber&&\:\,=\:
\ffnk{ccccc}{q}{q^{-n}\!\sqrt{qa/c},-q^{v-n}\!\sqrt{qa/c},q^{1+n+u}}{q^{1+u+v}a,q^{-n}/c,-q}_{n}
 \\\label{whipple-relation-a}&&\:\:\times\:
 {_4\phi_3}\ffnk{ccccc}{q;q}{q^{v-n}a,q^{-n}/c,q^{-n},-q^{-u-n}}
{q^{-n}\!\sqrt{qa/c},-q^{v-n}\!\sqrt{qa/c},q^{-u-2n}}.
 \end{eqnarray}
Setting $u=\ell,\,v=m$ in \eqref{whipple-relation-a} and evaluating
the $_4\phi_3$-series on the right hand side by Theorem \ref{thm-c},
we establish the theorem.
\end{proof}

\begin{exam}[$\ell=0,m=1$ in Theorem \ref{thm-k}]
 \begin{eqnarray*}
&&\xqdn{_4\phi_3}\ffnk{ccccc}{q;q}{q^{-n},q^{1+n},\sqrt{qac},-\sqrt{q^3ac}}{-q,q^{2}a,qc}\\
&&\xqdn\:\:=\:\frac{q^{\binom{1+n}{2}}}{(1+\sqrt{qac})(1-\sqrt{qa/c})}
\frac{(q^{1-n}a;q^2)_{1+n}(q^{1-n}c;q^2)_{n}}{(q^2a;q)_n(qc;q)_n}\\
&&\xqdn\:\:+\:\:\frac{q^{\binom{1+n}{2}}}{(1+\sqrt{qac})(1-\sqrt{c/qa})}
\frac{(q^{2-n}a;q^2)_{n}(q^{-n}c;q^2)_{1+n}}{(q^2a;q)_n(qc;q)_n}.
\end{eqnarray*}
\end{exam}

\begin{exam}[$\ell=1,m=0$ in Theorem \ref{thm-k}]
 \begin{eqnarray*}
&&\xqdn{_4\phi_3}\ffnk{ccccc}{q;q}{q^{-n},q^{2+n},\sqrt{qac},-\sqrt{qac}}{-q,q^{2}a,qc}\\
&&\xqdn\:\:=\:\frac{q^{\binom{2+n}{2}}}{(qa-c)(1-q^{1+n})}
\frac{(q^{1-n}a;q^2)_{1+n}(q^{-1-n}c;q^2)_{1+n}}{(q^2a;q)_n(qc;q)_n}\\
&&\xqdn\:\:-\:\:\frac{q^{\binom{2+n}{2}}}{(qa-c)(1-q^{1+n})}
\frac{(q^{-n}a;q^2)_{1+n}(q^{-n}c;q^2)_{1+n}}{(q^2a;q)_n(qc;q)_n}.
 \end{eqnarray*}
\end{exam}

\begin{exam}[$\ell=1,m=1$ in Theorem \ref{thm-k}]
  \begin{eqnarray*}
&&\xqdn{_4\phi_3}\ffnk{ccccc}{q;q}{q^{-n},q^{2+n},\sqrt{qac},-\sqrt{q^3ac}}{-q,q^{3}a,qc}\\
&&\xqdn\:\:=\:\tfrac{q^{\binom{2+n}{2}}(1-\sqrt{q^{-1-2n}ac})}{(\sqrt{qa}-\sqrt{c})(\sqrt{q^3a}-\sqrt{c})(1+\sqrt{qac})(1-q^{1+n})}
\frac{(q^{2-n}a;q^2)_{1+n}(q^{-n}c;q^2)_{1+n}}{(q^3a;q)_n(qc;q)_n}\\
&&\xqdn\:\:-\:\:\tfrac{q^{\binom{2+n}{2}}(1-\sqrt{q^{3+2n}ac})}{(\sqrt{qa}-\sqrt{c})(\sqrt{q^3a}-\sqrt{c})(1+\sqrt{qac})(1-q^{1+n})}
\frac{(q^{1-n}a;q^2)_{1+n}(q^{-1-n}c;q^2)_{1+n}}{(q^3a;q)_n(qc;q)_n}.
 \end{eqnarray*}
\end{exam}

Taking $u=-\ell,\,v=m$ in \eqref{whipple-relation-a} and calculating
the $_4\phi_3$-series on the right hand side by Theorem \ref{thm-b},
we get the following theorem.

\begin{thm}\label{thm-l}
For two complex numbers $\{a,c\}$ and two nonnegative integers
$\{\ell, m\}$ with $\ell\leq n$, there holds
  \begin{eqnarray*}
&&\qdn\xqdn{_4\phi_3}\ffnk{ccccc}{q;q}{q^{-n},q^{1-\ell+n},\sqrt{qac},-q^{m}\!\sqrt{qac}}
{-q,q^{1-\ell+m}a,qc}
=\ffnk{ccccc}{q}{q^{-n}\!\sqrt{qa/c},-q^{m-n}\!\sqrt{qa/c},q^{1-\ell+n}}{q^{1-\ell+m}a,q^{-n}/c,-q}_{n}
\\&&\qdn\xqdn\:\:\times\:
\ffnk{ccccc}{q}{q^{m-n}a,\sqrt{qac}}{q^{m}ac,q^{-n}\!\sqrt{qa/c}}_{m}
\sum_{i=0}^{\ell}\sum_{j=0}^{m}q^{(\ell-n)i+\binom{i+1}{2}+(n+\frac{5}{2})j}\Big(\frac{c}{a}\Big)^{\frac{j}{2}}
\frac{1-q^{2m-2j}ac}{1-q^{2m}ac}
 \\&&\qdn\xqdn\:\:\times\:
\ffnk{ccccc}{q}{q^{-\ell},q^{-n},-q^{-n},q^{2m-n-j}a,q^{j-n}/c}
{q,q^{m-n}\!\!\sqrt{qa/c},-q^{m-n}\!\!\sqrt{qa/c},q^{\ell-2n},q^{i-2n-1}}_i
\ffnk{ccccc}{q}{q^{-m},q^{-n}/c,q^{-2m}/ac}{q,q^{1-2m+n}/a,q^{1-m}/ac}_j
 \\&&\qdn\xqdn\:\:\times\:
 \ffnk{ccccc}{q^2}{q^{1+2m-n+i-j}a,q^{1-n+i+j}/c}{q,q^{1+2m-2n+2i}a/c}_{n-i}.
 \end{eqnarray*}
\end{thm}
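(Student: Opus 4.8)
The plan is to imitate the proof of Theorem \ref{thm-k}, but to feed the inner series into Theorem \ref{thm-b} in place of Theorem \ref{thm-c}. The engine is the transformation \eqref{whipple-relation-a}, which arises from iterating Sear's formula \eqref{sear} and rewrites the target $_4\phi_3$, carrying the free integer parameters $u$ and $v$, as a $q$-shifted-factorial prefactor times an inner $_4\phi_3$ of $q$-Jain-Watson shape.

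First I would set $u=-\ell$ and $v=m$ in \eqref{whipple-relation-a}. The left-hand side then becomes precisely the series of Theorem \ref{thm-l}, and the detached prefactor reproduces the two leading $q$-factorials in the statement, namely $\ffnk{ccccc}{q}{q^{-n}\!\sqrt{qa/c},-q^{m-n}\!\sqrt{qa/c},q^{1-\ell+n}}{q^{1-\ell+m}a,q^{-n}/c,-q}_{n}$. The inner series collapses to ${_4\phi_3}\ffnk{ccccc}{q;q}{q^{m-n}a,q^{-n}/c,q^{-n},-q^{\ell-n}}{q^{-n}\!\sqrt{qa/c},-q^{m-n}\!\sqrt{qa/c},q^{\ell-2n}}$, which is exactly of the type summed in Theorem \ref{thm-b}.

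The crucial step is reading off the dictionary to Theorem \ref{thm-b}. Matching the two denominator entries $\sqrt{qac}$ and $-q^{\ell}\!\sqrt{qac}$ of that theorem against $q^{-n}\!\sqrt{qa/c}$ and $-q^{m-n}\!\sqrt{qa/c}$ forces the integer called $\ell$ there to be our $m$, while the tail $q^{m-2n}\mapsto q^{\ell-2n}$ identifies its $m$ with our $\ell$; reading the surviving numerator entries then pins down the parameter substitution $a\mapsto q^{-n}a$ and $c\mapsto q^{-n}/c$. In particular the hypothesis $m\le n$ of Theorem \ref{thm-b} turns into the condition $\ell\le n$ imposed in Theorem \ref{thm-l}, which is a reassuring consistency check.

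Finally I would substitute the evaluation supplied by Theorem \ref{thm-b} and, since $\ell$ and $m$ have exchanged their roles, relabel the two summation indices $i\leftrightarrow j$ to reach the layout in the statement. I expect the only real obstacle to be clerical: merging the detached prefactor with the leading $q$-factorial of Theorem \ref{thm-b}, and simplifying every base-$q$ and base-$q^2$ shifted factorial after the substitutions $a\mapsto q^{-n}a$, $c\mapsto q^{-n}/c$, so that products such as $q\tilde a\tilde c$ become the powers of $a/c$ and the factor $(c/a)^{j/2}$ appearing in the final summand, and so that all exponents of $q$ land correctly. No new idea is needed beyond this bookkeeping.
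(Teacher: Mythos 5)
Your proposal is correct and is exactly the paper's proof: the authors obtain Theorem \ref{thm-l} by taking $u=-\ell$, $v=m$ in \eqref{whipple-relation-a} and evaluating the resulting inner $_4\phi_3$ by Theorem \ref{thm-b}, with precisely the dictionary you describe (its $\ell$ and $m$ swapped with yours, $a\mapsto q^{-n}a$, $c\mapsto q^{-n}/c$), which also explains the hypothesis $\ell\leq n$. The only nitpick is that the second leading $q$-factorial $\ffnk{ccccc}{q}{q^{m-n}a,\sqrt{qac}}{q^{m}ac,q^{-n}\!\sqrt{qa/c}}_{m}$ comes from the prefactor of Theorem \ref{thm-b} rather than from \eqref{whipple-relation-a}, as your closing remarks in fact acknowledge.
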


\begin{exam}[$\ell=1,m=0$ in Theorem \ref{thm-l}]
\begin{eqnarray*}
&&\xxqdn\xxqdn\xqdn\qqdn{_4\phi_3}\ffnk{ccccc}{q;q}{q^{-n},q^{n},\sqrt{qac},-\sqrt{qac}}{-q,a,qc}\\
&&\xxqdn\xxqdn\xqdn\qqdn\:\:=\:\frac{q^{\binom{1+n}{2}}}{1+q^{n}}
\frac{(q^{1-n}a;q^2)_{n}(q^{1-n}c;q^2)_{n}}{(a;q)_n(qc;q)_n}\\
&&\xxqdn\xxqdn\xqdn\qqdn\:\:+\:\:\frac{q^{\binom{1+n}{2}}}{1+q^{n}}
\frac{(q^{-n}a;q^2)_{n}(q^{2-n}c;q^2)_{n}}{(a;q)_n(qc;q)_n}.
 \end{eqnarray*}
\end{exam}

\begin{exam}[$\ell=1,m=1$ in Theorem \ref{thm-l}]
\begin{eqnarray*}
&&\xqdn{_4\phi_3}\ffnk{ccccc}{q;q}{q^{-n},q^{n},\sqrt{qac},-\sqrt{q^3ac}}{-q,qa,qc}\\
&&\xqdn\:\:=\:\frac{q^{\binom{1+n}{2}}(1+\sqrt{q^{1+2n}ac})}{(1+q^{n})(1+\sqrt{qac})}
\frac{(q^{1-n}a;q^2)_{n}(q^{1-n}c;q^2)_{n}}{(qa;q)_n(qc;q)_n}\\
&&\xqdn\:\:+\:\:\frac{q^{\binom{1+n}{2}}(1+\sqrt{q^{1-2n}ac})}{(1+q^{n})(1+\sqrt{qac})}
\frac{(q^{2-n}a;q^2)_{n}(q^{2-n}c;q^2)_{n}}{(qa;q)_n(qc;q)_n}.
 \end{eqnarray*}
\end{exam}

Performing the replacement $\sqrt{a}\to-q^{-m}\!\sqrt{a}$ in
Theorems \ref{thm-k} and \ref{thm-l}, we can derive summation
formulas for the following two series:
 \begin{eqnarray*}
&&\qdn\xqdn{_4\phi_3}\ffnk{ccccc}{q;q}{q^{-n},q^{1+\ell+n},\sqrt{qac},-q^{-m}\!\sqrt{qac}}
{-q,q^{1+\ell-m}a,qc},\\
&&\qdn\xqdn{_4\phi_3}\ffnk{ccccc}{q;q}{q^{-n},q^{1-\ell+n},\sqrt{qac},-q^{-m}\!\sqrt{qac}}
{-q,q^{1-\ell-m}a,qc}.
\end{eqnarray*}
The corresponding concrete results will not be displayed here.

\subsection{Extensions of Jain's $q$-Whipple formula}

\begin{thm}\label{thm-m}
For two complex numbers $\{a,c\}$ and two nonnegative integers
$\{\ell, m\}$ with $m\leq n$, there holds
  \begin{eqnarray*}
&&\xqdn{_4\phi_3}\ffnk{ccccc}{q;q}{a,q^{1+\ell}/a,q^{-n},-q^{m-n}}{c,q^{1+\ell+m-2n}/c,-q}
=\ffnk{ccccc}{q}{q^{2}/ac,-q/a}{q^{2}/a^2,-q/c}_{\ell}
\ffnk{ccccc}{q}{q^{1-m+n},-q^{-\ell}c}{q^{n-m-\ell}c,-q}_{n}
\\&&\xqdn\:\:\times\:
\sum_{i=0}^{\ell}\sum_{j=0}^{m}q^{(\ell+1)i+(m-n)j+\binom{j+1}{2}}\frac{(-1)^i}{c^i}
 \frac{1-q^{1+2i}/a^2}{1-q/a^2}\ffnk{ccccc}{q}{q^{-\ell},c/a,q/a^2}{q,q^2/ac,q^{2+\ell}/a^2}_i
 \\&&\xqdn\:\:\times\:
 \ffnk{ccccc}{q}{q^{-m},q^{-n},-q^{-n},q^{-i-1}ac,q^{i}c/a}{q,c,-c,q^{m-2n},q^{j-2n-1}}_j
\ffnk{ccccc}{q^2}{q^{j-i}ac,q^{1+i+j}c/a}{q,q^{2j}c^2}_{n-j}.
 \end{eqnarray*}
\end{thm}

\begin{proof}
 Utilizing \eqref{sear} with $a=-q^{v-n},\:b=a,\:c=q^{1+u}/a,\:d=-q,\:e=c$, we obtain the relation
 \begin{eqnarray}
{_4\phi_3}\ffnk{ccccc}{q;q}{a,q^{1+u}/a,q^{-n},-q^{v-n}}
{c,q^{1+u+v-2n}/c,-q}&&\xqdn=\:
\ffnk{ccccc}{q}{q^{1-v+n},-q^{-u}c}{q^{n-u-v}c,-q}_{n}
 \nonumber\\\label{whipple-relation-b}&&\xqdn\times\:
 {_4\phi_3}\ffnk{ccccc}{q;q}{q^{-u-1}ac,c/a,q^{-n},-q^{v-n}}
{c,-q^{-u}c,q^{v-2n}}.
 \end{eqnarray}
Setting $u=\ell,\,v=m$ in \eqref{whipple-relation-b} and evaluating
the $_4\phi_3$-series on the right hand side by
\eqref{equivalence-e}, we establish the theorem.
\end{proof}

\begin{exam}[$\ell=0,m=1$ in Theorem \ref{thm-m}]
 \begin{eqnarray*}
&&\xxqdn\xxqdn\xqdn\qdn{_4\phi_3}\ffnk{ccccc}{q;q}{a,q/a,q^{-n},-q^{1-n}}{c,q^{2-2n}/c,-q}\\
&&\xxqdn\xxqdn\xqdn\qdn\:\:=\:
\frac{1}{1+q^n}\frac{(ac;q^2)_{n}(qc/a;q^2)_{n}}{(c;q)_n(q^{n-1}c;q)_n}
\\&&\xxqdn\xxqdn\xqdn\qdn\:\:+\:\:\frac{q^n}{1+q^n}\frac{(ac/q;q^2)_{n}(c/a;q^2)_{n}}{(c;q)_n(q^{n-1}c;q)_n}.
 \end{eqnarray*}
\end{exam}

\begin{exam}[$\ell=1,m=0$ in Theorem \ref{thm-m}]
 \begin{eqnarray*}
&&\xqdn{_4\phi_3}\ffnk{ccccc}{q;q}{a,q^2/a,q^{-n},-q^{-n}}{c,q^{2-2n}/c,-q}\\
&&\xqdn\:\:=\:
\frac{(q-c)(q^2-ac)}{(q-a)(q^2-q^{2n}c^2)}\frac{(ac;q^2)_{n}(qc/a;q^2)_{n}}{(c/q;q)_{2n}}
\\&&\xqdn\:\:+\:\:\frac{q(q-c)(c-a)}{(q-a)(q^2-q^{2n}c^2)}\frac{(ac/q;q^2)_{n}(q^2c/a;q^2)_{n}}{(c/q;q)_{2n}}.
 \end{eqnarray*}
\end{exam}

\begin{exam}[$\ell=1,m=1$ in Theorem \ref{thm-m}]
 \begin{eqnarray*}
&&\xxqdn\!\!{_4\phi_3}\ffnk{ccccc}{q;q}{a,q^2/a,q^{-n},-q^{1-n}}{c,q^{3-2n}/c,-q}\\
&&\xxqdn\:=\:
\frac{q-q^na}{(q-a)(1+q^n)}\frac{(ac/q^2;q^2)_{n}(qc/a;q^2)_{n}}{(c;q)_{n}(q^{n-2}c;q)_{n}}
\\&&\xxqdn\:+\:\:\frac{q^{1+n}-a}{(q-a)(1+q^n)}\frac{(ac/q;q^2)_{n}(c/a;q^2)_{n}}{(c;q)_{n}(q^{n-2}c;q)_{n}}.
 \end{eqnarray*}
\end{exam}

Taking $u=\ell,\,v=-m$ in \eqref{whipple-relation-b} and calculating
the $_4\phi_3$-series on the right hand side by
\eqref{equivalence-f}, we get the following theorem.

\begin{thm}\label{thm-n}
For two complex numbers $\{a,c\}$ and two nonnegative integers
$\{\ell, m\}$, there holds
 \begin{eqnarray*}
&&\xqdn{_4\phi_3}\ffnk{ccccc}{q;q}{a,q^{1+\ell}/a,q^{-n},-q^{-m-n}}
{c,q^{1+\ell-m-2n}/c,-q}
=\ffnk{ccccc}{q}{q^{2}/ac,-q/a}{q^{2}/a^2,-q/c}_{\ell}
\ffnk{ccccc}{q}{q^{1+m+n},-q^{-\ell}c}{q^{n+m-\ell}c,-q}_{n}
\\&&\xqdn\:\:\times\:
\sum_{i=0}^{\ell}\sum_{j=0}^{m}q^{(\ell+1)i-nj+\binom{j+1}{2}}\frac{(-1)^{i+j}}{c^i}
 \frac{1-q^{1+2i}/a^2}{1-q/a^2}\ffnk{ccccc}{q}{q^{-\ell},c/a,q/a^2}{q,q^2/ac,q^{2+\ell}/a^2}_i
 \\&&\xqdn\:\:\times\:
 \ffnk{ccccc}{q}{q^{-m},q^{-m-n},-q^{-m-n},q^{-i-1}ac,q^{i}c/a}
{q,c,-c,q^{-m-2n},q^{j-2m-2n-1}}_j
\ffnk{ccccc}{q^2}{q^{j-i}ac,q^{1+i+j}c/a}{q,q^{2j}c^2}_{m+n-j}.
 \end{eqnarray*}
\end{thm}

\begin{exam}[$\ell=0,m=1$ in Theorem \ref{thm-n}]
\begin{eqnarray*}
&&\xxqdn\xxqdn\xxqdn\qdn{_4\phi_3}\ffnk{ccccc}{q;q}{a,q/a,q^{-n},-q^{-1-n}}{c,q^{-2n}/c,-q}\\
&&\xxqdn\xxqdn\xxqdn\qdn\:\:=\:
\frac{1}{(1-q^{1+n})(1+q^nc)}\frac{(ac;q^2)_{1+n}(qc/a;q^2)_{1+n}}{(c;q)_{1+2n}}
\\&&\xxqdn\xxqdn\xxqdn\qdn\:\:-\:\:\frac{q^{1+n}}{(1-q^{1+n})(1+q^nc)}\frac{(ac/q;q^2)_{1+n}(c/a;q^2)_{1+n}}{(c;q)_{1+2n}}.
 \end{eqnarray*}
\end{exam}

\begin{exam}[$\ell=1,m=1$ in Theorem \ref{thm-n}]
\begin{eqnarray*}
&&\xqdn\qdn{_4\phi_3}\ffnk{ccccc}{q;q}{a,q^2/a,q^{-n},-q^{-1-n}}{c,q^{1-2n}/c,-q}\\
&&\xqdn\qdn\:\:=\:
\frac{q^2(1+q^{n}a)}{(q-a)(1+q^{n}c)(q+q^{n}c)(1-q^{1+n})}\frac{(ac/q^2;q^2)_{1+n}(qc/a;q^2)_{1+n}}{(c;q)_{2n}}
\\&&\xqdn\qdn\:\:-\:\:
\frac{q(a+q^{2+n})}{(q-a)(1+q^{n}c)(q+q^{n}c)(1-q^{1+n})}\frac{(ac/q;q^2)_{1+n}(c/a;q^2)_{1+n}}{(c;q)_{2n}}.
\end{eqnarray*}
\end{exam}

Setting $u=-\ell,\,v=m$ in \eqref{whipple-relation-b} and evaluating
the $_4\phi_3$-series on the right hand side by Theorem \ref{thm-b},
we obtain the following theorem.

\begin{thm}\label{thm-o}
For two complex numbers $\{a,c\}$ and two nonnegative integers
$\{\ell, m\}$ with $m\leq n$, there holds
\begin{eqnarray*}
&&\xqdn{_4\phi_3}\ffnk{ccccc}{q;q}{a,q^{1-\ell}/a,q^{-n},-q^{m-n}}
{c,q^{1-\ell+m-2n}/c,-q}=\ffnk{ccccc}{q}{a,q^{\ell-1}ac}{c,q^{\ell-1}a^2}_{\ell}
\ffnk{ccccc}{q}{q^{1-m+n},-q^{\ell}c}{q^{n-m+\ell}c,-q}_{n}
\\&&\xqdn\:\:\times\:
\sum_{i=0}^{\ell}\sum_{j=0}^{m}q^{i+(m-n)j+\binom{j+1}{2}}\frac{1}{c^i}
 \frac{1-q^{1-2\ell+2i}/a^2}{1-q^{1-2\ell}/a^2}
 \ffnk{ccccc}{q}{q^{-\ell},c/a,q^{1-2\ell}/a^2}{q,q^{2-2\ell}/ac,q^{2-\ell}/a^2}_i
 \\&&\xqdn\:\:\times\:
 \ffnk{ccccc}{q}{q^{-m},q^{-n},-q^{-n},q^{2\ell-i-1}ac,q^{i}c/a}
{q,q^{\ell}c,-q^{\ell}c,q^{m-2n},q^{j-2n-1}}_j
\ffnk{ccccc}{q^2}{q^{2\ell-i+j}ac,q^{1+i+j}c/a}{q,q^{2\ell+2j}c^2}_{n-j}.
 \end{eqnarray*}
\end{thm}

\begin{exam}[$\ell=1,m=0$ in Theorem \ref{thm-o}]
\begin{eqnarray*}
&&\xxqdn\xxqdn\xqdn{_4\phi_3}\ffnk{ccccc}{q;q}{a,1/a,q^{-n},-q^{-n}}{c,q^{-2n}/c,-q}\\
&&\xxqdn\xxqdn\xqdn\:\:=\:
\frac{1-ac}{1+a}\frac{(q^2ac;q^2)_{n}(qc/a;q^2)_{n}}{(c;q)_{1+2n}}
\\&&\xxqdn\xxqdn\xqdn\:\:+\:\:
\frac{a-c}{1+a}\frac{(qac;q^2)_{n}(q^2c/a;q^2)_{n}}{(c;q)_{1+2n}}.
  \end{eqnarray*}
\end{exam}

\begin{exam}[$\ell=1,m=1$ in Theorem \ref{thm-o}]
\begin{eqnarray*}
&&\xqdn\xxqdn{_4\phi_3}\ffnk{ccccc}{q;q}{a,1/a,q^{-n},-q^{1-n}}{c,q^{1-2n}/c,-q}\\
&&\xqdn\xxqdn\:\:=\:
\frac{1+aq^n}{(1+a)(1+q^n)}\frac{(ac;q^2)_{n}(qc/a;q^2)_{n}}{(c;q)_{2n}}
\\&&\xqdn\xxqdn\:\:+\:\:
\frac{a+q^n}{(1+a)(1+q^n)}\frac{(qac;q^2)_{n}(c/a;q^2)_{n}}{(c;q)_{2n}}.
 \end{eqnarray*}
\end{exam}

Taking $u=-\ell,\,v=-m$ in \eqref{whipple-relation-b} and
calculating the $_4\phi_3$-series on the right hand side by Theorem
\ref{thm-c}, we get the following theorem.

\begin{thm}\label{thm-p}
For two complex numbers $\{a,c\}$ and two nonnegative integers
$\{\ell, m\}$, there holds
\begin{eqnarray*}
&&\qdn\xqdn{_4\phi_3}\ffnk{ccccc}{q;q}{a,q^{1-\ell}/a,q^{-n},-q^{-m-n}}
{c,q^{1-\ell-m-2n}/c,-q}=\ffnk{ccccc}{q}{a,q^{\ell-1}ac}{c,q^{\ell-1}a^2}_{\ell}
\ffnk{ccccc}{q}{q^{1+m+n},-q^{\ell}c}{q^{\ell+m+n}c,-q}_{n}
\\&&\qdn\xqdn\:\:\times\:
\sum_{i=0}^{\ell}\sum_{j=0}^{m}q^{i-nj+\binom{j+1}{2}}\frac{(-1)^j}{c^i}
 \frac{1-q^{1-2\ell+2i}/a^2}{1-q^{1-2\ell}/a^2}
 \ffnk{ccccc}{q}{q^{-\ell},c/a,q^{1-2\ell}/a^2}{q,q^{2-2\ell}/ac,q^{2-\ell}/a^2}_i
 \\&&\qdn\xqdn\:\:\times\:
 \ffnk{ccccc}{q}{q^{-m},q^{-m-n},-q^{-m-n},q^{2\ell-i-1}ac,q^{i}c/a}
{q,q^{\ell}c,-q^{\ell}c,q^{-m-2n},q^{j-2m-2n-1}}_j
\ffnk{ccccc}{q^2}{q^{2\ell-i+j}ac,q^{1+i+j}c/a}{q,q^{2\ell+2j}c^2}_{m+n-j}.
 \end{eqnarray*}
\end{thm}

\begin{exam}[$\ell=1,m=1$ in Theorem \ref{thm-p}]
\begin{eqnarray*}
&&\xqdn{_4\phi_3}\ffnk{ccccc}{q;q}{a,1/a,q^{-n},-q^{-1-n}}{c,q^{-1-2n}/c,-q}\\
&&\xqdn\:\:=\:
\frac{1-aq^{1+n}}{(1+a)(1-q^{1+n})}\frac{(ac;q^2)_{1+n}(qc/a;q^2)_{1+n}}{(c;q)_{2+2n}}
\\&&\xqdn\:\:+\:\:
\frac{a-q^{1+n}}{(1+a)(1-q^{1+n})}\frac{(qac;q^2)_{1+n}(c/a;q^2)_{1+n}}{(c;q)_{2+2n}}.
 \end{eqnarray*}
\end{exam}

With the change of the parameters $\ell$ and $m$, Theorems
\ref{thm-a} and \ref{thm-b}-\ref{thm-p} can produce more concrete
formulas. Due to the limit of space, the corresponding results will
not be laid out in the paper.

\acknowledgements

The authors are grateful to the reviewers for helpful comments.


\nocite{*}
\bibliographystyle{plainnat}
\bibliography{q-Watson}

\end{document}